\title{A Graphical Calculus for Classical and Quantum Microformal Morphisms}
\author{Andreas Swerdlow \\ \small The University of Manchester, UK \\ \small\texttt{andreas.swerdlow@manchester.ac.uk}}
\date{}
\newtheorem{theorem}{Theorem}[section]
\newtheorem{proposition}[theorem]{Proposition}
\newtheorem{lemma}[theorem]{Lemma}
\newtheorem{definition}[theorem]{Definition}
\newtheorem{example}[theorem]{Example}
\newtheorem{remark}[theorem]{Remark}
\newtheorem*{pullback_law}{The Pullback by a Thick Morphism}
\newtheorem*{composition_law}{The Composition Law for Thick Morphisms}
\newtheorem*{transformation_law}{The Transformation Law for Thick Morphisms}
\newtheorem*{general_eq}{The General Equation}
\newtheorem*{q_pullback_law}{The Pullback by a Quantum Thick Morphism}
\newtheorem*{q_composition_law}{The Composition Law for Quantum Thick Morphisms}
\newtheorem*{q_transformation_law}{The Transformation Law for Quantum Thick Morphisms}
\newtheorem*{q_general_eq}{The Quantum General Equation}
\newtheorem*{Feynman_rules}{The Feynman Rules}
\newtheorem*{outline}{Outline of the Paper}
\newtheorem*{acknowledgement}{Acknowledgement}
\newcommand{\dely}[1]{\frac{\partial}{\partial y^{#1}}}
\newcommand{\contraction}[3]{\langle #1 \mathopen{\Yleft} #3 \mathclose{\Yright} #2 \rangle}
\newcommand{\expih}[1]{e^{\frac{i}{\hbar} #1}}
\tikzstyle{vertex}=[circle, draw, inner sep=0pt, minimum size=6pt]
\newcommand{\vertex}{\node[vertex]}
\begin{document}

\maketitle

\begin{abstract}
    We develop a graphical calculus for the microformal or thick morphisms introduced by Th. Voronov. This allows us to write the infinite series arising from pullbacks, compositions, and coordinate transformations of thick morphisms as sums over bipartite trees. The methods are inspired by those employed by Cattaneo-Dherin-Felder in their work on formal symplectic groupoids. We also extend this calculus to quantum thick morphisms, which are special types of Fourier integral operators quantizing classical thick morphisms. The relationship between the calculi for classical and quantum thick morphisms resembles the relationship between the semi-classical and full perturbative expansions over Feynman diagrams in quantum field theory.
\end{abstract}

\tableofcontents

\addcontentsline{toc}{section}{Introduction}
\section*{Introduction}\label{sec_intro}

In \cite{Voronov_pullbacks}, Th. Voronov introduced the notion of microformal or thick morphisms, which generalises the notion of smooth maps between manifolds, and used them to construct $L_\infty$-morphisms for functions on homotopy Poisson and homotopy Schouten manifolds. A thick morphism is a special type of formal canonical relation between the cotangent bundles of two manifolds, which is thought of as a map between the underlying manifolds themselves. The key feature of thick morphisms is that they give rise to a generalisation of the pullback on functions, which is now nonlinear and formal. 

Pullbacks by thick morphisms are defined using an equation of the fixed point type, which can be solved (in principle) by iterations. However, since this is an unending process, it does not give a closed form for the full power series expansion of the pullback, and can only practically supply approximate solutions to low orders. Compositions and coordinate transformations of thick morphism generating functions are defined by the same equation as pullbacks so are also formal and suffer from the same issues. In this paper, we obtain the full solutions for pullbacks, compositions, and coordinate transformations as expansions over a class of bipartite trees. We call the resulting calculational process the graphical calculus for thick morphisms.  

This graphical calculus is heavily inspired by a graphical calculus of Cattaneo, Dherin and Felder (CDF), first presented in \cite{groupoid} and later in \cite{operad}. In \cite{groupoid}, CDF consider (formal) symplectic groupoids, with multiplicative structures determined by a specific class of generating functions. They develop a graphical calculus to obtain a perturbative condition for the associativity of the resulting groupoid multiplication. In \cite{operad}, CDF describe an operad, with objects given by these generating functions, and a composition map which captures the composition of the resulting groupoid products, and they extend their graphical calculus from \cite{groupoid} to the whole operad. Crucially for this paper, the equation which defines CDFs composition map is essentially the composition equation for generating functions of thick morphisms, the only difference being the forms of the power series for the generating functions. We have made a few changes to adapt CDFs method to our case, some of which can be seen as generalisations and some of which can be seen as modifications. However, many of the arguments and calculations made are analogous to those in \cite{operad}. 

CDF based their graphical calculus on a standard approach in Numerical Analysis to the study of order conditions of Runge-Kutta methods. The concepts they use, such as the use of trees to keep track of terms coming from Taylor expansions, the symmetry factor on rooted trees, and the Butcher product are all established tools in the theory of B-series (see \cite{B-series} and Chapter 3 of \cite{GNI_book}). This theory is used to compare the Taylor series for an exact solution to an ODE with the Taylor series of a numerical solution obtained by a Runge-Kutta method, by writing both series as expansions over trees. In CDFs case they are only interested in the exact solution, and the graphical calculus they obtain can be seen as an application of this standard method, as applied to a partitioned system of ODEs (where the theory is now called the theory of P-series). Indeed, where a general partitioned system of the form 
\begin{equation*}
    \dot{y} = f(y,z), \quad \dot{z} = g(y,z) 
\end{equation*}
gives rise to expansions for the exact solutions $y, z$ in terms of bicoloured trees (trees where each vertex is assigned one of two colours), the partitioned systems considered by CDF and this paper are of the form
\begin{equation*}
    \dot{y} = f(z), \quad \dot{z} = g(y), 
\end{equation*}
which imposes bipartiteness (edges can only connect vertices of different colour) on the trees in the expansions of the exact solutions.

In \cite{Voronov_oscill} and \cite{Voronov_microformal_homotopyalg} Voronov also introduces the notion of quantum thick morphisms, which are exactly the quantum analogue of thick morphisms (which we will call classical thick morphisms when the distinction is required). Indeed the category of classical thick morphisms is the $\hbar \rightarrow 0$ limit of the category of quantum thick morphisms, which is defined as the opposite category of the category with morphisms given by a certain type of Fourier integral operators. Each quantum thick morphisms also has a "quantum" generating function, and as for the classical case, pullbacks, compositions and coordinate transformations are defined in terms of this generating function and are formal. 

We would again like to have closed forms for the resulting formal series, which we achieve using a graphical calculus which is exactly the quantum analogue of the graphical calculus for classical thick morphisms. In particular the the quantum calculus extends the classical calculus from trees to graphs with loops, where each loop adds a factor of $\hbar$. CDF did not consider a quantum case in their papers, so we believe that the work in this section is original. 

For other works involving the study of thick morphisms, see \cite{KHUDAVERDIAN_VORONOV}, \cite{VORONOV2024105105}, \cite{SHEMYAKOVA_VORONOV}, and \cite{Shemyakova_Yilmaz}.  Besides the work of CDF, graphical calculi have been put to use many other places in geometry. Some examples include Maxim Kontsevich's deformation quantization formula \cite{Kontsevich2003}, which involves a sum over so-called Kontsevich graphs, and Ezra Getzler's formula for the generalized Campbell-Hausdorff series for nilpotent $L_\infty$-Algebras, which involves a sum over rooted trees \cite{Getzler}.

\begin{outline}\normalfont
    We start in section \ref{sec_prelim} by recalling the definitions for classical thick morphisms, and the equations defining their pullbacks, compositions and coordinate transformations. Since each equation is of the same form we can treat them together by writing down a general equation. Motivated by the low order "by-hand" calculation in section \ref{sec_low_order}, we next propose an expansion for the solution to this general equation in section \ref{sec_trees_and_Feynman_rules} and prove it in section \ref{sec_gen_eq_solve}. We then use this general graphical calculus to obtain specialised graphical calculi for pullbacks, compositions, and coordinate transformations in sections \ref{sec_pullback_graph}, \ref{sec_comp_graph}, and \ref{sec_transformation_graph}.

    In section \ref{sec_quantum_graphical_calc} we perform the same process again for quantum thick morphisms, by first finding a general graphical calculus in section \ref{sec_qgen_eq_solve}, and then specialising to the cases of pullbacks, compositions, and coordinate transformations in sections \ref{sec_qpullback_graph}, \ref{sec_qcomp_graph}, and \ref{sec_qtransform_graph}. 

    Finally, in section \ref{sec_discussion}, we briefly discuss how the classical calculus can be obtained as the classical limit of the quantum calculus, and how to extend both calculi to the case of supermanifolds. We finish by demonstrating the graphical calculus with a low order calculation of a quantum pullback. 
\end{outline}

\begin{acknowledgement}
    I would like to thank my supervisor Ted Voronov for suggesting the line of research that became this paper, and for his guidance. Ted informed me that the possibility of a graphical calculus for thick morphisms based on bipartite trees was suggested to him and Hovhannes Khudaverdian by Sergei Shadrin, in an email discussion following a talk by Khudaverdian in June 2020.
\end{acknowledgement}

\section{The Graphical Calculus for Classical Thick Morphisms}\label{sec_graphical_calc}

\subsection{Preliminaries on Classical Thick Morphisms}\label{sec_prelim}

We start by recalling the definition of a thick morphism as given in \cite{Voronov_microformal_homotopyalg}. 
\begin{definition}
    Let $M_1$ and $M_2$ be supermanifolds. A thick morphism $\Phi : M_1 \text{\ding{225}} M_2$ is a formal canonical relation $\Phi \subset T^*M_2 \times \overline{T^*M_1}$, with a collection of generating functions for $\Phi$ in each local coordinate system, that depend only on the position variables of $M_1$ and the momentum variables of $M_2$. That is, given any local coordinates $x^a$ and $y^i$ on $M_1$ and $M_2$, with corresponding conjugate momenta $p_a$ and $q_i$ (cotangent fibre coordinates), there is a function $S = S(x,q)$ defined in these coordinates, such that $\Phi$ is given locally as a subset by
    \begin{equation*}
        \Phi = \left\{  (y^i, q_i ~ ; ~ x^a, p_a) \in T^*M_2 \times \overline{T^*M_1} ~~ \middle| ~~ y^i = (-1)^{\tilde{i}} \frac{\partial S}{\partial q_i}(x,q), ~~ p_a = \frac{\partial S}{\partial x^a}(x,q)  \right\} .
    \end{equation*}
    By formal canonical relation, we mean a Lagrangian submanifold of the formal neighbourhood of $M_2 \times T^*M_1$ in $T^*M_2 \times \overline{T^*M_1}$. Therefore, the generating functions $S(x,q)$ are taken to be formal power series 
    \begin{equation*}
        S(x,q) = S^0(x) + S^i(x)q_i + S^{ij}(x)q_jq_i + S^{ijk}(x)q_kq_jq_i + \dots
    \end{equation*}
    in the momentum variables. We will always assume without loss of generality that the coefficients $S^{a_1 \dots a_m}(x)$ are symmetric in their indices.
\end{definition}

From here on out we will only consider the setting of regular (non-super) manifolds. We do this for simplicity of exposition, and leave the extension to the super case until the end of the paper.
 
We now recall the equations governing the three processes we are interested in: pullbacks, compositions, and changes of coordinates. In each case let $M_1, M_2, M_3$ be manifolds, and let $\Phi : M_1 \text{\ding{225}} M_2$, $\Phi_F: M_1 \text{\ding{225}} M_2$ and $\Phi_G: M_2 \text{\ding{225}} M_3$ be thick morphisms between them. Choosing suitable local coordinates $x, y, z$ on $M_1, M_2, M_3$ respectively with corresponding conjugate momenta $p, q, r$, we have the following generating functions for $\Phi$, $\Phi_F$ and $\Phi_G$ given as formal power series in the momenta.
\begin{align*}
    S(x,q) & = S_0(x) + \varphi^i(x)q_i + S^{ij}(x)q_iq_j + \dots \\
    F(x, q) & = F_0(x) + \varphi^i(x)q_i + F^{ij}(x)q_iq_j + \dots \\
    G(y,r) & = G_0(y) + \gamma^i(y)r_i + G^{ij}(y)r_ir_j + \dots 
\end{align*}
Given these generating functions we then have the following equations.
\begin{pullback_law}
    The pullback by $\Phi$ is a formal mapping $\Phi^*: C^\infty (M_2) \rightarrow C^\infty (M_1)$ defined for any $g \in C^\infty (M_2)$ by 
    \begin{equation}\label{eq_pullback_1}
        \Phi^*[g](x) = g(\overline{y}) + S(x,\overline{q}) - \overline{y}^\alpha \overline{q}_\alpha ,
    \end{equation}
    where $\overline{y}=\overline{y}(x)$ and $\overline{q}=\overline{q}(x)$ are determined by the system 
    \begin{align}\label{eq_pullback_2}
        \overline{q}^\alpha = \frac{\partial}{\partial y^\alpha} g(\overline{y}), ~~~~ \text{and} ~~~~
        \overline{y}^\alpha = \frac{\partial}{\partial q^\alpha} S(x,\overline{q}).
    \end{align}
\end{pullback_law}

\begin{composition_law}
    The composition $\Phi_H = \Phi_G \circ \Phi_F: M_1 \text{\ding{225}} M_3$ is also a thick morphism and has generating function $H(x,r)$ given by
    \begin{equation}\label{eq_thick_comp_1}
        H(x,r) = G(\overline{y},r) + F(x,\overline{q}) - \overline{y}^\alpha\overline{q}_\alpha,
    \end{equation}
    where $\overline{y}=\overline{y}(x,r)$ and $\overline{q}=\overline{q}(x,r)$ are the unique power series in $r$ determined by the system 
    \begin{align}\label{eq_thick_comp_2}
        \overline{q}^\alpha = \frac{\partial}{\partial y^\alpha} G(\overline{y},r), ~~~~ \text{and} ~~~~
        \overline{y}^\alpha = \frac{\partial}{\partial q^\alpha} F(x,\overline{q}).
    \end{align}
\end{composition_law}

\begin{transformation_law}
    Let $x = x(x')$, $y = y(y')$ be an invertible change of local coordinates, with $p', q'$ the corresponding conjugate momenta. If $S(x,q)$ is the generating function for $\Phi$ in the "old" coordinate system $(x,p,y,q)$, then the generating function $S'(x',q')$ in the "new" coordinate system $(x',p',y',q')$ is given by 
    \begin{equation}\label{eq_thick_transformation_1}
        S'(x',q') = S(x(x'),\overline{q}) + y^{\alpha'}(\overline{y})q_{\alpha'} - \overline{y}^\alpha \overline{q}_\alpha ,
    \end{equation}
    where $y^{\alpha'} = y^{\alpha'}(y)$ is the inverse change of coordinates, and $\overline{y}=\overline{y}(x',q')$ and $\overline{q}=\overline{q}(x',q')$ are the unique power series in $q'$ determined by the system 
    \begin{align}\label{eq_thick_transformation_2}
        \overline{q}^\alpha = \frac{\partial}{\partial y^\alpha} y^{\alpha'}(\overline{y}), ~~~~ \text{and} ~~~~
        \overline{y}^\alpha = \frac{\partial}{\partial q^\alpha} S(x(x'),\overline{q}).
    \end{align}
\end{transformation_law}

All three of these equations take essentially the same form, so we now write down an abstract general equation capturing this form, which we will solve using a graphical calculus described in section \ref{sec_trees_and_Feynman_rules}. 
\begin{general_eq}
    Consider a formal power series $S(q) = S_0 + \varphi^iq_i + S^{ij}q_iq_j+ \dots$, and a smooth function $G(y) : \mathbb{R}^{d} \rightarrow \mathbb{R}$. The coefficients of $S$ and the function $G$ may depend on other parameters which we ignore to simplify notation (in our cases the coefficients of $S$ depend on $x$ and $G$ is a formal power series in either $r$ or $q_{\alpha'}$). We are interested in the result $R(S,G)$ of the equation 
        \begin{equation}\label{eq_general_eq_1}
            R(S,G) = S(\overline{q}) + G(\overline{y}) - \overline{y}^\alpha \overline{q}_\alpha , 
        \end{equation}
        where 
        $\overline{y}$ and $\overline{q}$ are the unique functional power series in $G$ determined by the system
        \begin{align}\label{eq_general_eq_2}
        \overline{q}^\alpha = \frac{\partial}{\partial y^\alpha}  G(\overline{y}), ~~~~ \text{and} ~~~~
        \overline{y}^\alpha = \frac{\partial}{\partial q^\alpha} S(\overline{q}).
    \end{align}
\end{general_eq}

\subsection{Low Order Calculation}\label{sec_low_order}

To see what the graphical calculus should look like, let's try to solve equations \eqref{eq_general_eq_1} and \eqref{eq_general_eq_2} by iterations using Taylor's theorem. We can only do this to low orders since the terms quickly become unmanageable. However, the structure of these low order terms will be instructive in developing a procedure for calculating terms of all orders. 

We calculate up to second order in $G$, i.e. we ignore any terms containing more than one factor coming from $G$ or its derivatives. It turns out that we will only need the terms in $S$ up to second order in $q$, so we start with 
\begin{align*}
    S(q) = S_0 + \varphi^iq_i + S^{ij}q_iq_j .
\end{align*}
We write $\varphi$ for the vector with coefficients $\varphi^i$. Then by taking derivatives we get 
\begin{equation*}
    \overline{q}^\alpha = \frac{\partial}{\partial y^\alpha}  G(\overline{y})
\end{equation*}
and
\begin{equation*}
    \overline{y}^\alpha = \frac{\partial}{\partial q^\alpha} S(\overline{q}) = \varphi^\alpha + 2  S^{\alpha b} \overline{q}_b.
\end{equation*}
Next we plug our expression for $\overline{q}_\alpha$ into our expression for $\overline{y}^\alpha$, which gives
\begin{align*}
    \overline{y}^\alpha & = \varphi^\alpha + 2 S^{\alpha b} \dely{b} G(\overline{y})   
\end{align*}
Then we plug this expression into itself and expand about $\varphi$ using Taylor's formula (here we will only need the formula up to linear terms), giving 
\begin{gather*}
    \overline{y}^\alpha = \varphi^\alpha + 2  S^{\alpha b} \dely{b} G(\varphi + \dots)  \\
     = \varphi^\alpha + 2  S^{\alpha b} \dely{b} G(\varphi)  + 4 S^{\alpha b} \dely{a}\dely{b} G(\varphi) S^{a c} \dely{c} G(\varphi) .
\end{gather*}
Note that we have performed a second Taylor expansion about $\varphi$ for the last term, only needing the zeroth order in these cases. We can now plug this expression into our expression for $\overline{q}_\alpha$ and again expand about $\varphi$ to get
\begin{gather*}
    \overline{q}^\alpha =  \dely{\alpha} G(\varphi + \dots)  \\
    =  \dely{\alpha} G(\varphi) + 2 \dely{b} G(\varphi) S^{a b} \dely{a}\dely{\alpha} G(\varphi) .
\end{gather*}
We now find an expansion for the terms in equation \eqref{eq_general_eq_1}. To calculate $ G(\overline{y})$ we simply perform the same process as for $\overline{q}_\alpha$, which results in a very similar expansion but with any $\dely{\alpha}$ derivatives removed. So we have
\begin{gather*}
     G(\overline{y}) =  G(\varphi) + 2  \dely{b} G(\varphi) S^{a b} \dely{a} G(\varphi) .
\end{gather*}
To calculate $S(\overline{q})$ we just plug in our expansion for $\overline{q}$ and expand the products, giving
\begin{gather*}
    S(\overline{q}) = S_0 +  \dely{b} G(\varphi) \varphi^{b} + 2 \dely{c} G(\varphi) S^{a c} \dely{a}\dely{b} G(\varphi) \varphi^{b} \\ 
        + \dely{a}G(\varphi) \dely{b} G(\varphi) S^{a b}     .
\end{gather*}
Finally we expand the product $\overline{y}^\alpha\overline{q}_\alpha$ ignoring the higher order terms and get 
\begin{gather*}
    \overline{y}^\alpha\overline{q}_\alpha =  \varphi^\alpha \dely{\alpha} G(\varphi) + 2  \varphi^\alpha \dely{b} G(\varphi) S^{a b} \dely{a}\dely{\alpha} G(\varphi) \\   + 2 \dely{b} G(\varphi) S^{\alpha b} \dely{\alpha} G(\varphi) . 
\end{gather*} 
All of these term cancel with terms in $G(\overline{y})$ and $S(\overline{q})$ and we end up with the much simpler expansion for $R(S,G)$ given by
\begin{gather*}
    R(S,G) = S_0 + G(\varphi) 
    + S^{a b} \dely{a}G(\varphi) \dely{b} G(\varphi)  .
\end{gather*}

First notice that all terms in the above expansion and in the expansions for $G(\overline{y})$, $S(\overline{q})$, and $\overline{y}^\alpha\overline{q}_\alpha$ are contractions of upper indices coming from coefficients in $S$ with lower indices coming from derivatives of $G$. A standard approach to representing contractions of indices is via graphs, where an edge between two vertices represents a contraction between the objects represented by the vertices. We would like to be able to write the expansion of $R(S,G)$ up to any order as a sum over terms coming from a certain class of graphs, thereby obtaining a combinatorial approach to calculating pullbacks, compositions, and coordinate transformations of thick morphisms. 

To see what this class of graphs should look like, notice that we need two types of vertices, one to represent coefficient functions in $S$ and another to represent derivatives of $G$. Therefore, we will use bipartite graphs, where each vertex is coloured black or white. Also notice that the expansions for $\overline{q}_\alpha$ and $\overline{y}^\alpha$ contain terms of the same form except that one index is left uncontracted. For $\overline{q}_\alpha$ the uncontracted lower index comes from a derivative of $G$ and for $\overline{y}^\alpha$ the uncontracted upper index comes from a coefficient in $S$. We can represent an uncontracted index by an unconnected edge coming from one vertex. This is the same as choosing a what is called a root for each tree. In the next section we formally define our class of graphs and the terms associated to them. 

\subsection{Trees and Feynman Rules}\label{sec_trees_and_Feynman_rules}

Most of the conventions and notations here are taken from \cite{operad} with some new additions.
\begin{definition}
    \begin{itemize}
        \item A bipartite graph $t$ is a triple $t = (B_t, W_t, E_t)$, where $B_t = \{1, \dots n\}$ and $W_t = \{-1,\dots,-m\}$ are sets of disjoint vertices and $E_t \subset B_t \times W_t$ is the submultiset of edges (so we allow more than one edge to connect two vertices, but do not distinguish such edges). We will draw elements of $B_t$ as $\bullet$ and elements of $W_t$ as $\circ$. In a bipartite graph, vertices can only connect to vertices of the opposite colour. We denote by $V_t = B_t \cup W_t$ the set of all vertices. For any edge $e \in E_t$, denote by $e_\bullet$ its component in $B_t$ and by $e_\circ$ its component in $W_t$ (the black white vertices to which $e$ is attached).
        \item An isomorphism of bipartite graphs $t, t'$ with the same number of black and white vertices (denoted by $|t|_\bullet = |t'|_\bullet = n$ and $|t|_\circ = |t'|_\circ = m$ respectively) consists of a pair of permutations $(\sigma_\bullet,\sigma_\circ) \in S_n \times S_m$ and a bijection $\rho: E_t \rightarrow E_{t'}$ such that for all $e \in E_t$ we have $\rho(e) = ((\sigma_\bullet(e_\bullet),\sigma_\circ(e_\circ)))$.
        \item A bipartite tree is a bipartite graph $t$ which has no cycles. An isomorphism of bipartite trees is just an isomorphism of bipartite graphs.
        \item A rooted bipartite tree is a bipartite tree with a distinguished vertex called the root. An isomorphism of rooted bipartite trees is an isomorphism of bipartite trees $t \xrightarrow{\cong} t'$ that sends the root of $t$ to the root of $t'$. 
        \item A hooked bipartite tree is just a rooted bipartite tree where we think of the root having one "half-connected" edge which comes from it that is not connected to any other vertex.
    \end{itemize}
\end{definition}

We denote by $sym(t)$ the group of automorphisms of $t$, where the notion of isomorphism used will be clear from context. We denote the set of bipartite trees by $T$, the set of rooted bipartite trees by $RT$, and the set of hooked bipartite trees by $HT$. For any set of trees $A$, we use brackets $[A]$ to denote the trees up to isomorphism. We also use subscripts $RT_\bullet$ and $RT_\circ$ to denote the subsets of rooted trees with black and white roots respectively and similarly with hooked trees. We use the symbols $\bullet$, $\circ$ to denote the single vertex trees, and $\multimapdotinv$, $\multimapinv$ for the single vertex hooked trees.

The leaves of a tree are the vertices with degree one (connected to exactly one other vertex), discounting the possible root. We use the superscript $A^\circ$ to denote the trees in $A$ with white leaves only. Note that the tree with single black vertex $\bullet$ is contained in $RT^\circ$, $T^\circ$ and $\multimapdotinv$ is contained in $HT^\circ$ since in each case the black vertex is not a leaf. Finally, we define $A^+ := A - \{\bullet\}$ or $A^+ := A - \{\multimapdotinv\}$ as appropriate.

We now introduce a formula associating an appropriate term of the expansions to each such tree. Note that we will use both notations $S^\alpha$ and $\varphi^\alpha$ for the coefficients of the linear term in $q$ of $S$.  

\begin{definition}
    To each bipartite tree $t = (B_t, W_t, E_t)$ we associate a term $\contraction{S}{G}{t}$ by the following. For each vertex of any colour $v \in V_t$, let $E_v = \{ e \in E_t ~ | ~ \text{$e$ is connected to $v$}\}$ and denote the elements of $E_v$ by $e^v_1, \dots, e^v_{m_v}$, so $m_v$ is the number of edges connected to $v$. We then use the labels $e^v_k$ as dummy indices to be summed over by the Einstein summation convention and define
    \begin{equation}\label{eq_contraction_def}
        \contraction{S}{G}{t} := \left( \prod_{v \in B_t} m_v! S^{e^v_1, \dots, e^v_{m_v}} \right) \left( \prod_{w \in W_t} \dely{e^w_1} \dots \dely{e^w_{m_w}} G(\varphi) \right).
    \end{equation}
    For a rooted tree $t$ we define $\contraction{S}{G}{t}$ by the same formula, simply forgetting the choice of root. However, for a hooked tree $t$ the half-connected edge leads to an uncontracted index in the factor associated to the single vertex connected to this edge. So the resulting term will be a tensor with one extra upper or lower index.
\end{definition}

This formula can be encoded in a set of Feynman-esque rules 

\begin{Feynman_rules}
    To each bipartite tree $t$ we associate a term $\contraction{S}{G}{t}$ by the following process. 
    \begin{enumerate}
        \item For every white vertex $u \in W_t$, write a factor $\dely{*} ... \dely{*} G(\varphi)$, where the number of partial derivatives is the number of edges connected to the vertex. Leave these indices blank for now.
        \item For every black vertex $v \in B_t$ write a factor $m! S^{* \dots *}$, where $m$ is the number of edges connected to the vertex and the number of (for now blank) upper indices of $S$ is equal to $m$. Naturally if $v$ has no edges connected to it write a factor $S_0$.
        \item For every edge between a black and a white vertex, contract a blank index from each of the factors corresponding to the vertices.
    \end{enumerate} 
\end{Feynman_rules}

The main difference with our method and the one presented in \cite{operad} is a change in the factors associated to black vertices. In \cite{operad}, the form for black vertices mirrors that for white vertices but with derivatives of coefficient functions from $S$ instead of $G$. Our method takes advantage of the special form of $S(q)$ as a formal power series in $q$ to expand the black vertex terms. This is why CDF need weightings on black vertices and we do not.

We are now ready to present the expansion for $R(S,G)$ in terms of white-leaved bipartite trees.
\begin{proposition}\label{prop_general_eq_graph}
    Given a formal power series $S(q) = S_0 + \varphi^iq_i + S^{ij}q_iq_j+ \dots$, and a smooth function $G(y) : \mathbb{R}^{d} \rightarrow \mathbb{R}$, then $R(S,G)$, which is defined by equations \eqref{eq_general_eq_1} and \eqref{eq_general_eq_2}, has the following expansion as a functional power series in $G$. 
    \begin{equation}\label{eq_gen_eq_expansion}
        R(S,G) = \sum_{t \in T^\circ} \frac{1}{|t|_\circ! |t|_\bullet!} \contraction{S}{G}{t} = \sum_{t \in [T^\circ]} \frac{1}{|sym(t)|} \contraction{S}{G}{t}.
    \end{equation}
    The second equality holds since isomorphic trees always produce the same term from the Feynman rules, and the number of trees isomorphic to $t$ is equal to $\frac{|t|_\circ ! |t|_\bullet !}{|sym(t)|}$.
\end{proposition}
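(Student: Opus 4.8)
The plan is to solve the coupled fixed‑point system \eqref{eq_general_eq_2} for $\overline{y}$ and $\overline{q}$ as explicit sums over hooked trees, substitute these into \eqref{eq_general_eq_1}, and then reorganise the result by a rooting argument. Conceptually, $R(S,G)$ is the critical value of the ``action'' $\Psi(y,q)=S(q)+G(y)-y^\alpha q_\alpha$, whose stationarity equations $\partial\Psi/\partial q_\alpha=0$ and $\partial\Psi/\partial y^\alpha=0$ are exactly \eqref{eq_general_eq_2}; the classical (tree‑level) stationary expansion of such a critical value is a sum over connected diagrams with no loops, which is the conceptual reason that only bipartite \emph{trees}, and not general graphs, occur here. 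I will establish this combinatorially. The first ingredient is an auxiliary lemma giving the solutions as
\begin{align*}
    \overline{y}^\alpha = \sum_{t \in HT_\bullet^\circ} \frac{1}{|t|_\circ!\,|t|_\bullet!}\,\contraction{S}{G}{t}, \qquad
    \overline{q}^\alpha = \sum_{t \in HT_\circ^\circ} \frac{1}{|t|_\circ!\,|t|_\bullet!}\,\contraction{S}{G}{t},
\end{align*}
where each right‑hand side is a tensor carrying the one free index $\alpha$ produced by the hook of the (black, resp. white) root, and the sums range over white‑leaved hooked trees.

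I would prove this lemma by simultaneous induction on the total number of vertices, feeding the tree sums through the two equations of \eqref{eq_general_eq_2}. Writing $\overline{y}^\alpha=\varphi^\alpha+\sum_{m\ge 2} m\,S^{\alpha b_1\cdots b_{m-1}}\,\overline{q}_{b_1}\cdots\overline{q}_{b_{m-1}}$ (the $q$‑derivative of $S$) and substituting the expansion of $\overline{q}$ grafts $m-1$ white‑rooted subtrees onto a new black root, the Butcher product; dually, $\overline{q}_\alpha=\partial_\alpha G(\overline{y})$, expanded by Taylor's formula \emph{about $\varphi$}, grafts black‑rooted subtrees onto a new white root. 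Expanding $G$ and its derivatives about $\varphi$ rather than about $0$ absorbs the leading term $\varphi^\alpha$ of $\overline{y}^\alpha$ into the argument $G(\varphi)$ sitting at the vertices, and this is precisely what forbids black leaves and enforces the white‑leaved condition. The real work, and the step I expect to be the main obstacle, is matching weights: the factor $m$ coming from $\partial/\partial q_\alpha$, the multinomial factors from reordering coincident subtrees, and the Taylor factors $1/k!$ must combine with the factorials $m_v!$ in \eqref{eq_contraction_def} so that every grafted labelled tree emerges with the uniform coefficient $1/(|t|_\circ!\,|t|_\bullet!)$.

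With the lemma in hand I substitute into \eqref{eq_general_eq_1}. Expanding $G(\overline{y})$ about $\varphi$ realises it as a sum over white‑leaved trees each carrying a distinguished \emph{white} root (the outermost $G$); expanding $S(\overline{q})$ gives the same trees carrying a distinguished \emph{black} root; and $\overline{y}^\alpha\overline{q}_\alpha$ gives trees carrying a distinguished \emph{edge}, namely the contraction joining a black hook of $\overline{y}$ to a white hook of $\overline{q}$. Each marking is subject to a validity constraint inherited from the lemma (a black neighbour of a white root must be non‑bare; a cut edge may leave a bare black root). Fixing an unrooted tree $t$ with $n=|t|_\bullet$ and $m=|t|_\circ$, and using that all weights are the uniform $1/(n!\,m!)$, a direct count of valid markings then gives the coefficient of $t$ in $R(S,G)$. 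For $t\in T^\circ$ all $m$ white, all $n$ black, and all $n+m-1$ edge markings are valid, so the net coefficient is $m+n-(n+m-1)=1$, one more vertex than edges in a tree. For $t\notin T^\circ$ the black‑leaf constraints permit no white marking and force the black‑vertex and edge markings to pair off, giving net coefficient $0$ (for a single black leaf, the one black‑vertex marking cancels the one edge marking; two or more black leaves admit no valid marking at all). This yields the first equality in \eqref{eq_gen_eq_expansion}; here one must be careful that ``white‑leaved'' is read with the root discounted, so that a degree‑one root becomes a genuine leaf once the tree is unrooted.

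Finally, the second equality is the passage from labelled to isomorphism classes. Since $\contraction{S}{G}{t}$ is manifestly invariant under relabelling of black and white vertices, isomorphic labelled trees contribute equal terms, and the orbit--stabiliser theorem produces exactly $|t|_\circ!\,|t|_\bullet!/|sym(t)|$ labelled representatives of each class. Grouping the labelled sum by isomorphism type therefore converts $1/(|t|_\circ!\,|t|_\bullet!)$ into $1/|sym(t)|$, completing the proof.
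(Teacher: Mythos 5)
Your proposal is correct and follows essentially the same route as the paper: hooked-tree expansions for $\overline{y},\overline{q}$ obtained by feeding the series through \eqref{eq_general_eq_2} with Taylor expansion about $\varphi$, interpretation of $G(\overline{y})$, $S(\overline{q})$, and $\overline{y}^\alpha\overline{q}_\alpha$ as white-rooted, black-rooted, and edge-marked (Butcher product) trees respectively, and the final coefficient $|V_t|-|E_t|=1$ for $t\in T^\circ$ together with the cancellation of the single-black-leaf trees. The only cosmetic difference is that you count labelled markings with weights $1/(|t|_\circ!\,|t|_\bullet!)$ where the paper works with isomorphism classes and symmetry factors $1/\sigma(t)$, a translation the statement itself already records.
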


Before proving the Proposition, we check that it reproduces our low order calculation from the previous section with significantly less effort. 
\begin{example}
    Let's calculate $R(S,G)$ to second order in $G$ using the graphical calculus. There are only three isomorphism classes of white-leaved bipartite trees with up to two white vertices. They are
    \begin{center}\vspace{-\baselineskip}
    \[\begin{tikzpicture}[x=2cm, y=1cm]
	\vertex[fill] (a) at (0,0) {};
	\vertex (b) at (1,0) {};
	\vertex (c) at (2,0.5) {};
	\vertex[fill] (d) at (2.5,-0.5) {};
        \vertex (e) at (3,0.5) {};
	\path
		(c) edge (d) 
            (d) edge (e)
	;
    \end{tikzpicture}\] 
    \end{center}
   The third tree has one non-trivial symmetry so it's symmetry group has order $2$. So to second order in $G$ we get  
   \begin{equation*}
       R(S,G) \approx S_0 + G(\varphi) + S^{ab} \dely{a} G(\varphi) \dely{b} G(\varphi) .
   \end{equation*}
   This agrees with the calculation in section \ref{sec_low_order}.
\end{example} 

We split the proof of Proposition \ref{prop_general_eq_graph} into the following lemmas. Even with the difference in terms mentioned above, many of the calculations are very similar to those in \cite{operad}. We omit any proofs of lemmas that are completely unchanged. 

We first introduce a way of constructing a new tree from a list of hooked trees which will be crucial to our calculations.
\begin{definition}\label{def_tree_bracket}
    Given a list of black-hooked trees $t_1, \dots t_m \in [HT_\bullet]$, we can construct a new rooted tree, denoted $[t_1,\dots,t_m]_{\circ}$, by connecting all of the half-connected edges of $t_1, \dots , t_m$ to a white vertex and declaring this to be the root. If we instead attach a new half-connected edge to this vertex, we can construct a hooked tree which we denote $[t_1,\dots,t_m]_{\multimapinv}$. The same operations work with black and white vertices swapped. 
\end{definition}
This allows us to recursively construct the set of isomorphism classes of rooted or hooked bipartite trees. Indeed, for any $t \in [RT]$, we have one of three cases.
\begin{enumerate}
    \item $t$ has a single vertex so $t=\bullet$ or $t= \circ$
    \item $t = [t_1, \dots , t_m]_{\circ}$ for some $t_1, \dots, t_m \in [HT_\bullet]$
    \item $t = [t_1,\dots,t_m]_\bullet$ for some $t_1, \dots, t_m \in [HT_\circ]$
\end{enumerate}
Similarly for hooked trees.

We can use this to introduce the symmetry factor $\sigma(t)$ of a rooted or hooked bipartite tree $t$ by the following recursive definition.
\begin{definition}[Symmetry factor]
    \begin{enumerate}
         \item []
         \item $\sigma(\circ) = \sigma(\bullet) = 1$
         \item $\sigma(t) = \mu_1! \mu_2! \dots \sigma(t_1)\dots\sigma(t_m)$ if $t = [t_1, \dots, t_m]_{\circ}$ or $t = [t_1, \dots, t_m]_{\bullet}$,
     \end{enumerate}
     where the $\mu_i$ are the sizes of the partions created by separating the set $\{t_1,\dots,t_m\}$ into isomorphism classes. 

     We call this the symmetry factor, since if we take any representative $t' \in RT$ of the equivalence class $t \in [RT]$, then $\sigma(t)$ recursively calculates the number of symmetries of $t'$, i.e. $\sigma(t) = |sym(t')|$, and similarly for hooked trees. Note that $\sigma(t)$ depends upon the choice of root.
 \end{definition}  

The next lemma calculates the terms associated to trees constructed as in definition \ref{def_tree_bracket}. It is crucial to most of the calculations that follow. The proof is by inspection. 
\begin{lemma}\label{lemma_tree_bracket_calc}
\begin{enumerate}[label=\alph*)]
    \item[]
    \item $\contraction{S}{G}{t} = \contraction{S}{G}{t_1}^{b_1} \dots \contraction{S}{G}{t_m}^{b_m} \frac{\partial}{\partial y^{b_1}} \dots \frac{\partial}{\partial y^{b_m}} G(\varphi)$ \\ if $t = [t_1, \dots, t_m]_{\circ}$
    \item $\contraction{S}{G}{t} = m! S^{b_1 \dots b_m} \contraction{S}{G}{t_1}_{b_1} \dots \contraction{S}{G}{t_m}_{b_m} $ \\ if $t = [t_1, \dots, t_m]_{\bullet}$
    \item $\contraction{S}{G}{t}_\alpha = \contraction{S}{G}{t_1}^{b_1} \dots \contraction{S}{G}{t_m}^{b_m} \frac{\partial}{\partial y^\alpha} \frac{\partial}{\partial y^{b_1}} \dots \frac{\partial}{\partial y^{b_m}} G(\varphi)$ \\ if $t = [t_1, \dots, t_m]_{\multimapinv}$
    \item $\contraction{S}{G}{t}^{\alpha} = (m+1)! S^{\alpha b_1 \dots b_m} \contraction{S}{G}{t_1}_{b_1} \dots \contraction{S}{G}{t_m}_{b_m}$ \\ if $t = [t_1, \dots, t_m]_{\multimapdotinv}$
\end{enumerate}
\end{lemma}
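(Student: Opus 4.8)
The plan is to prove all four identities by directly unpacking the defining contraction formula \eqref{eq_contraction_def} and tracking how the vertex and edge sets of the constructed tree decompose under the bracket operation of Definition \ref{def_tree_bracket}. I would treat case (a), $t = [t_1,\dots,t_m]_{\circ}$, in full and then indicate how the remaining three follow by the same bookkeeping with the appropriate colour- and half-edge modifications.

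First I would record the combinatorial decomposition underlying the bracket. When we form $t = [t_1,\dots,t_m]_{\circ}$, a new white root $w_0$ is adjoined and the $m$ half-connected edges of $t_1,\dots,t_m$ are promoted to genuine edges $e_1,\dots,e_m$ joining $w_0$ to the hook-vertices $v_1,\dots,v_m$ (each black, since $t_i \in [HT_\bullet]$). Because these are trees joined only at $w_0$, the vertex and edge sets split disjointly as $V_t = \{w_0\} \sqcup \bigsqcup_i V_{t_i}$ and $E_t = \{e_1,\dots,e_m\} \sqcup \bigsqcup_i E_{t_i}$, where in $E_{t_i}$ the former half-edge is now identified with $e_i$.

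Next I would split the product in \eqref{eq_contraction_def} along this decomposition. The product over black vertices factors as $\prod_i \prod_{v \in B_{t_i}}$, and the product over white vertices factors as a root factor times $\prod_i \prod_{w \in W_{t_i}}$. The key observation is that for each hook-vertex $v_i$ the factorial in \eqref{eq_contraction_def} counts all incident edges including $e_i$, which is exactly the factorial appearing in the hooked-tree term $\contraction{S}{G}{t_i}^{b_i}$, where the half-edge already contributes one free upper index. Hence the partial product over the vertices of $t_i$, with the edge $e_i$ left as a free index $b_i$, equals $\contraction{S}{G}{t_i}^{b_i}$. The root $w_0$ carries precisely the $m$ edges $e_1,\dots,e_m$, so its factor is $\frac{\partial}{\partial y^{e_1}}\cdots\frac{\partial}{\partial y^{e_m}} G(\varphi)$; contracting each $e_i$ against the matching free index $b_i$ and relabelling dummy indices yields formula (a).

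Finally, the other three cases run through the identical decomposition. For (b) the root is black with $m$ edges, so its factor is $m! S^{b_1\dots b_m}$ (the factorial now present because it is an $S$-coefficient), and the $b_i$ contract against the free lower indices of the white-hooked terms. For (c) and (d) one additionally attaches a half-edge to the root, raising its degree to $m+1$: this leaves one index $\alpha$ free, which is exactly why the black root in (d) carries $(m+1)!$ rather than $m!$, whereas the white root in (c) simply acquires an extra derivative $\frac{\partial}{\partial y^\alpha}$. The only point that requires care throughout is the factorial convention on black vertices, namely that it counts half-connected edges on equal footing with ordinary ones; this is what forces the factorials on the two sides to agree, and once it is pinned down the argument is pure index bookkeeping, justifying the ``by inspection'' label.
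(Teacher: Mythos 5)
Your proposal is correct and is essentially a careful write-out of the paper's own proof, which is stated only as ``by inspection'': you unpack the defining formula \eqref{eq_contraction_def} along the vertex/edge decomposition induced by the bracket construction, and you correctly identify the one delicate point, namely that the factorial on a black vertex counts the half-connected edge on equal footing with ordinary edges (which is why the root in case (d) carries $(m+1)!$). Nothing further is needed.
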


\subsection{Solving the General Equation}\label{sec_gen_eq_solve}

We are now ready to start our calculations by first finding expansions over hooked trees for $\overline{q}$ and $\overline{y}$.
\begin{lemma}
    We have unique solutions $\overline{q}$ and $\overline{y}$ to equation \eqref{eq_general_eq_2}, as functional power series in $G$, given by
    \begin{align} \label{eq_q_expansion}
        \overline{q}_\alpha &= \sum_{t\in[HT^\circ_\circ]} \frac{1}{\sigma(t)} \contraction{S}{G}{t}_\alpha, ~~ \text{and} \\ \label{eq_y_expansion}
        \overline{y}^\alpha &= \sum_{t\in[HT^\circ_\bullet]} \frac{1}{\sigma(t)} \contraction{S}{G}{t}^{\alpha} .
    \end{align}
\end{lemma}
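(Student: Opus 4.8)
The plan is to leverage uniqueness and reduce the statement to a verification. First I would establish that the system \eqref{eq_general_eq_2} admits a unique solution as a functional power series in $G$: since $\dely{\alpha} G$ carries at least one factor of $G$, substituting the first equation into the second yields a fixed-point equation for $\overline{y}$ whose right-hand side equals $\varphi^\alpha$ at zeroth order and increases the minimal $G$-order of any correction at each iteration. Hence the iteration stabilises order by order and pins down $\overline{y}$, and thus $\overline{q}$, uniquely. Consequently it suffices to define $Q_\alpha$ and $Y^\alpha$ to be the right-hand sides of \eqref{eq_q_expansion} and \eqref{eq_y_expansion} and to check that the pair $(Q,Y)$ satisfies both equations of \eqref{eq_general_eq_2}; uniqueness then forces $Q=\overline{q}$ and $Y=\overline{y}$.

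For the white equation $\overline{q}_\alpha = \dely{\alpha} G(\overline{y})$ I would Taylor expand $G$ about $\varphi$, writing $Y^b-\varphi^b$ as the sum over $[(HT^\circ_\bullet)^+]$ (the single black-vertex tree $\multimapdotinv$ contributes precisely $\varphi^b$, with $\sigma=1$, and is removed). Expanding the resulting $m$-fold product and applying Lemma \ref{lemma_tree_bracket_calc}(c), each monomial $\dely{\alpha}\dely{b_1}\cdots\dely{b_m} G(\varphi)\,\contraction{S}{G}{t_1}^{b_1}\cdots\contraction{S}{G}{t_m}^{b_m}$ is recognised as $\contraction{S}{G}{[t_1,\dots,t_m]_{\multimapinv}}_\alpha$, and the recursive decomposition of Definition \ref{def_tree_bracket} shows that every $t\in[HT^\circ_\circ]$ arises uniquely in this way. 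The symmetric, black-rooted computation handles $\overline{y}^\alpha = \frac{\partial}{\partial q_\alpha} S(Q)$: using the symmetry of the coefficients of $S$ one has $\frac{\partial}{\partial q_\alpha} S(Q) = \sum_{m\ge 0}(m+1)\,S^{\alpha b_1\cdots b_m} Q_{b_1}\cdots Q_{b_m}$, into which I substitute \eqref{eq_q_expansion} and apply Lemma \ref{lemma_tree_bracket_calc}(d) to obtain $\contraction{S}{G}{[t_1,\dots,t_m]_{\multimapdotinv}}^\alpha$, this time letting the subtrees range over all of $[HT^\circ_\circ]$, since attaching a single white vertex $\multimapinv$ to the black root produces an allowed white leaf rather than a forbidden black one.

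The heart of both computations, and the step I expect to be the main obstacle, is the factorial bookkeeping that converts the ordered sums coming from the Taylor and power-series expansions into the isomorphism-class sums $\sum_t \tfrac{1}{\sigma(t)}$. The mechanism is that a sum over ordered tuples $(t_1,\dots,t_m)$ weighted by $\prod_k \sigma(t_k)^{-1}$ equals $\tfrac{m!}{\mu_1!\mu_2!\cdots}$ times the corresponding sum over multisets, where the $\mu_i$ are the multiplicities of the isomorphism classes among the $t_k$; combined with the recursion $\sigma(t)=\mu_1!\mu_2!\cdots\sigma(t_1)\cdots\sigma(t_m)$ this produces exactly $\sigma(t)^{-1}$. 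In the white case the leftover $\tfrac{1}{m!}$ from Taylor's theorem cancels the $m!$ from the multiset count, while in the black case the weight $(m+1)!$ built into the black-vertex Feynman rule combines with the same $m!$ to reproduce precisely the coefficient $(m+1)$ of $\frac{\partial}{\partial q_\alpha} S$, with the $m=0$ term $\multimapdotinv$ accounting for $\varphi^\alpha$. Checking that these factors conspire correctly, and carefully respecting the $(\cdot)^+$ exclusion of single-vertex subtrees in the white case but not in the black case, is where the argument must be executed with care; once this is in place, the uniqueness recorded at the outset closes the proof.
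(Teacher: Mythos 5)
Your proposal is correct and follows essentially the same route as the paper: uniqueness of the formal power series solution, followed by verifying that the candidate tree expansions satisfy the system via Taylor expansion about $\varphi$, Lemma \ref{lemma_tree_bracket_calc} parts (c) and (d), and the ordered-tuple-to-isomorphism-class bookkeeping with the recursion $\sigma(t)=\mu_1!\mu_2!\cdots\sigma(t_1)\cdots\sigma(t_m)$. You even flag the same delicate points the paper relies on, namely the exclusion of $\multimapdotinv$ from the subtree sum in the white-rooted case but not of $\multimapinv$ in the black-rooted case, which is precisely why the expansion is restricted to white-leaved trees.
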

\begin{proof}
Uniqueness is immediate since we are working with formal power series. We first perform the calculation for $\overline{q}_\alpha$, by plugging in equation \eqref{eq_y_expansion}, then using the multivariate Taylor's formula and part (c) of Lemma \ref{lemma_tree_bracket_calc}.
\begin{gather*}
    \overline{q}_\alpha = \dely{\alpha} G(\overline{y}) = \dely{\alpha} G\left(\varphi + \sum_{t\in[HT^\circ_\bullet]^+} \frac{1}{\sigma(t)} \contraction{S}{G}{t}\right) \\
    = \sum_{m \ge 0} \frac{1}{m!} \left( \sum_{t_1\in[HT^\circ_\bullet]^+} \frac{1}{\sigma(t_1)} \contraction{S}{G}{t_1}^{b_1} \right) \dots \left( \sum_{t_m\in[HT^\circ_\bullet]^+} \frac{1}{\sigma(t_m)} \contraction{S}{G}{t_m}^{b_m} \right)  \dely{\alpha}\dely{b_1}\dots\dely{b_m} G(\varphi) \\
    = \sum_{m \ge 0} \sum_{t_1\in[HT^\circ_\bullet]^+} \dots \sum_{t_m\in[HT^\circ_\bullet]^+} \frac{1}{m!\sigma(t)} (\mu_1! \mu_2! \dots) \contraction{S}{G}{t}_\alpha \\
    = \sum_{t\in[HT^\circ_\circ]} \frac{1}{\sigma(t)} \contraction{S}{G}{t}_\alpha, ~~ \text{for} ~~ t = [t_1,\dots,t_m]_{\multimapinv} .
\end{gather*}
The previous calculation is the reason we only consider trees with white leaves, in contrast to \cite{operad}. Otherwise, the final equality would not hold. 

Next we perform the calculation for $\overline{y}^\alpha$, by plugging in equation \eqref{eq_q_expansion} then using Leibniz's rule and Lemma \ref{lemma_tree_bracket_calc} part (d), where we assume that the coefficients $S^{a_1 \dots a_i}$ of $S$ are symmetric in the indices.
\begin{gather*}
    \overline{y}^\alpha = \frac{\partial}{\partial q^\alpha} S(\overline{q}) = \sum_{m \ge 0} (m+1)  S^{\alpha b_1 \dots b_m} \overline{q}_{a_1} \dots \overline{q}_{a_m} \\
    = \sum_{m \ge 0} \sum_{t_1\in[HT^\circ_\circ]} \dots \sum_{t_m\in[HT^\circ_\circ]} \frac{1}{m!\sigma(t)} (\mu_1! \mu_2! \dots) (m+1)! S^{\alpha b_1 \dots b_m} \contraction{S}{G}{t_1}_{b_1} \dots \contraction{S}{G}{t_m}_{b_m}  \\
    = \sum_{t\in[HT^\circ_\bullet]} \frac{1}{\sigma(t)} \contraction{S}{G}{t}^{\alpha}, ~~ \text{for} ~~ t = [t_1,\dots,t_m]_\multimapdotinv
\end{gather*}
\end{proof}

Next we find an expansion over rooted trees for $R(S,G)$ .

\begin{lemma}\label{lemma_H_calc_1}
    We have a unique solution $R(S,G)$ to equation \eqref{eq_general_eq_1} given by
    \begin{gather*}
        R(S,G) = \sum_{t \in [RT^\circ]} \frac{1}{\sigma(t)} \contraction{S}{G}{t} \\ - \left( \sum_{t\in[HT^\circ_\bullet]} \frac{1}{\sigma(t)} \contraction{S}{G}{t}^{\alpha} \right) \left( \sum_{t\in[HT^\circ_\circ]} \frac{1}{\sigma(t)} \contraction{S}{G}{t}_\alpha \right)
    \end{gather*}
\end{lemma}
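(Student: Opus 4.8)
The plan is to expand the three terms on the right-hand side of equation \eqref{eq_general_eq_1} separately. The cross term $\overline{y}^\alpha \overline{q}_\alpha$ is immediate: it is simply the product of the two hooked-tree expansions \eqref{eq_y_expansion} and \eqref{eq_q_expansion} already established, and this produces exactly the second line of the claimed formula. The real content is to show that
\[
    S(\overline{q}) + G(\overline{y}) = \sum_{t \in [RT^\circ]} \frac{1}{\sigma(t)} \contraction{S}{G}{t},
\]
and I will obtain this by matching $G(\overline{y})$ with the white-rooted trees $[RT^\circ_\circ]$ and $S(\overline{q})$ with the black-rooted trees $[RT^\circ_\bullet]$, using the disjoint decomposition $[RT^\circ] = [RT^\circ_\bullet] \sqcup [RT^\circ_\circ]$ coming from the colour of the root.

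For the white-rooted contribution I would repeat the computation from the previous lemma almost verbatim. Substituting the expansion \eqref{eq_y_expansion} for $\overline{y}$ into $G(\overline{y})$ and applying the multivariate Taylor formula about $\varphi$ gives a sum over ordered tuples $(t_1, \dots, t_m)$ of black-hooked trees, but now \emph{without} the extra root derivative $\dely{\alpha}$ that appeared in the $\overline{q}_\alpha$ case. Grouping ordered tuples into isomorphism classes, the multinomial count $m!/(\mu_1! \mu_2! \cdots)$ cancels the $1/m!$ from Taylor's formula, and part (a) of Lemma \ref{lemma_tree_bracket_calc} assembles the factors into $\contraction{S}{G}{t}$ for $t = [t_1, \dots, t_m]_{\circ}$. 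The recursive definition $\sigma(t) = \mu_1! \mu_2! \cdots \sigma(t_1) \cdots \sigma(t_m)$ then collapses the remaining factors to $1/\sigma(t)$, yielding $G(\overline{y}) = \sum_{t \in [RT^\circ_\circ]} \frac{1}{\sigma(t)} \contraction{S}{G}{t}$; the $m=0$ term is $G(\varphi)$, matching the single white vertex $\circ$.

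For the black-rooted contribution I would write $S(\overline{q}) = \sum_{m \ge 0} S^{b_1 \cdots b_m} \overline{q}_{b_1} \cdots \overline{q}_{b_m}$, using the symmetry of the coefficients of $S$, and substitute the expansion \eqref{eq_q_expansion} for each $\overline{q}_{b_k}$. Here there is no Taylor factor $1/m!$; instead part (b) of Lemma \ref{lemma_tree_bracket_calc} carries a factor $m!$, so after the same regrouping into isomorphism classes the identity $m!/(\mu_1! \mu_2! \cdots \sigma(t_1) \cdots \sigma(t_m)) = m!/\sigma(t)$ combines with the $m!$ hidden inside $\contraction{S}{G}{t}$ to again give precisely $1/\sigma(t)$. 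This produces $S(\overline{q}) = \sum_{t \in [RT^\circ_\bullet]} \frac{1}{\sigma(t)} \contraction{S}{G}{t}$, with the $m=0$ term $S_0$ corresponding to the single black vertex $\bullet$. Adding the two sums over the disjoint pieces and subtracting the cross term gives the lemma.

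The main obstacle is the combinatorial bookkeeping of the symmetry factors, and in particular keeping straight that the factorials enter differently in the two cases --- through $1/m!$ from Taylor's formula for the white root, and through the explicit $m!$ of Lemma \ref{lemma_tree_bracket_calc}(b) for the black root --- yet both reductions terminate at the same clean weight $1/\sigma(t)$. I would take care to verify that the passage from a sum over ordered tuples to a sum over unordered isomorphism classes contributes exactly the multinomial factor $m!/(\mu_1! \mu_2! \cdots)$, and that the white-leaf condition is automatically preserved under the bracket construction, so that no trees outside $[RT^\circ]$ are produced and none are missed.
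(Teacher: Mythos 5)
Your proposal is correct and follows essentially the same route as the paper: Taylor expansion of $G(\overline{y})$ about $\varphi$ assembling into $[RT^\circ_\circ]$ via Lemma \ref{lemma_tree_bracket_calc}(a), product expansion of $S(\overline{q})$ assembling into $[RT^\circ_\bullet]$ via part (b), the disjoint decomposition $[RT^\circ]=[RT^\circ_\bullet]\sqcup[RT^\circ_\circ]$, and the cross term read off directly from the hooked-tree expansions. The symmetry-factor bookkeeping you describe (the multinomial count for passing from ordered tuples to isomorphism classes combining with the recursive definition of $\sigma$) matches the paper's computation.
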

\begin{proof}
To prove the lemma we calculate expansions for each term in equation \eqref{eq_general_eq_1} in turn. We first calculate $G(\overline{y})$ by plugging in the solution to $\overline{y}$ from equation \eqref{eq_y_expansion} then using Taylor's theorem and Lemma \ref{lemma_tree_bracket_calc} part (a). 
\begin{gather*}
    G(\overline{y}) =  G\left(\varphi + \sum_{t\in[RT^\circ_\bullet]^+} \frac{1}{\sigma(t)} \contraction{S}{G}{t}\right) \\
    = \sum_{m \ge 0} \frac{1}{m!} \left( \sum_{t_1\in[RT^\circ_\bullet]^+} \frac{1}{\sigma(t_1)} \contraction{S}{G}{t_1}^{b_1} \right) \dots \left( \sum_{t_m\in[RT^\circ_\bullet]^+} \frac{1}{\sigma(t_m)} \contraction{S}{G}{t_m}^{b_m} \right)  \dely{b_1}\dots\dely{b_m} G(\varphi) \\
    = \sum_{m \ge 0} \sum_{t_1\in[RT^\circ_\bullet]^+} \dots \sum_{t_m\in[RT^\circ_\bullet]^+} \frac{1}{m!\sigma(t)} (\mu_1! \mu_2! \dots) \contraction{S}{G}{t} \\
    = \sum_{t\in[RT^\circ_\circ]} \frac{1}{\sigma(t)} \contraction{S}{G}{t}, ~~ \text{for} ~~ t = [t_1,\dots,t_m]_{\circ} .
\end{gather*}

Next, we calculate $S(\overline{q})$ by plugging in equation \eqref{eq_q_expansion}, expanding the resulting product, and using Lemma \ref{lemma_tree_bracket_calc} part (b).
\begin{gather*}
    S(\overline{q}) = \sum_{m \ge 0} S^{b_1 \dots b_m} \overline{q}_{a_1} \dots \overline{q}_{a_m}  \\
    = \sum_{m \ge 0} \sum_{t_1\in[RT^\circ_\circ]} \dots \sum_{t_m\in[RT^\circ_\circ]} \frac{1}{m!\sigma(t)} (\mu_1 \mu_2! \dots) m! S^{b_1 \dots b_m} \contraction{S}{G}{t_1}_{b_1} \dots \contraction{S}{G}{t_m}_{b_m}  \\
    = \sum_{t\in[RT^\circ_\bullet]} \frac{1}{\sigma(t)} \contraction{S}{G}{t}, ~~ \text{for} ~~ t = [t_1,\dots,t_m]_\bullet
\end{gather*}

Finally, we note that $[RT^\circ_\circ] \cup [RT^\circ_\bullet] = [RT^\circ]$ and plug in equations \eqref{eq_q_expansion} and \eqref{eq_y_expansion} into the term $\overline{y}^\alpha \overline{q}_\alpha$.

\end{proof}

All that is left to complete the proof of Proposition \ref{prop_general_eq_graph} is to clean up the above expansion and write it in terms of unrooted trees. This requires a couple of lemmas and a definition which we present first before completing the proof. 

\begin{definition}[Butcher Product]
    We define the Butcher product $\tau \circ \theta \in [RT_\bullet]$ of two hooked trees $\tau \in [HT_\bullet]$ and $\theta \in [HT_\circ]$ by recursion on $\tau$. If $u = \multimapdotinv$, then define $\tau \circ \theta = [\theta]_\bullet$. If $\tau = [\tau_1, \dots, \tau_m]_\multimapdotinv$, then define $\tau \circ \theta = [\tau_1, \dots, \tau_m, \theta]_\bullet$. Note that the same procedure works if the root colours of $\tau$ and $\theta$ are swapped. The procedure is the same as attaching the half-connected edges of $\tau$ and $\theta$ and declaring the root of $\tau$ to be the root of $\tau \circ \theta$.
\end{definition}

\begin{lemma}\label{lemma_eldiff_circ}
    For any $\tau \in [HT_\bullet]$ and $\theta \in [HT_\circ]$ we have 
    \begin{equation*}
        \contraction{S}{G}{\tau}^{\alpha} \contraction{S}{G}{\theta}_\alpha = \contraction{S}{G}{\tau \circ \theta} .
    \end{equation*}
\end{lemma}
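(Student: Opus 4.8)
The plan is to prove the identity by a direct computation, decomposing $\tau$ and applying the two relevant cases of Lemma \ref{lemma_tree_bracket_calc}, then matching the resulting tensor expressions. Every black-hooked tree can be written as $\tau = [\tau_1, \dots, \tau_m]_\multimapdotinv$ for some (possibly empty) list $\tau_1, \dots, \tau_m \in [HT_\circ]$, with $m = 0$ recovering the single-vertex case $\tau = \multimapdotinv$. By the definition of the Butcher product this yields $\tau \circ \theta = [\tau_1, \dots, \tau_m, \theta]_\bullet$, a rooted tree with black root whose $m+1$ white-hooked subtrees are $\tau_1, \dots, \tau_m, \theta$.

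First I would compute the left-hand side. By part (d) of Lemma \ref{lemma_tree_bracket_calc},
\[
    \contraction{S}{G}{\tau}^{\alpha} = (m+1)! \, S^{\alpha b_1 \dots b_m} \contraction{S}{G}{\tau_1}_{b_1} \dots \contraction{S}{G}{\tau_m}_{b_m} ,
\]
so contracting the free upper index $\alpha$ against the lower index of $\contraction{S}{G}{\theta}_\alpha$ gives
\[
    \contraction{S}{G}{\tau}^{\alpha} \contraction{S}{G}{\theta}_\alpha = (m+1)! \, S^{\alpha b_1 \dots b_m} \contraction{S}{G}{\tau_1}_{b_1} \dots \contraction{S}{G}{\tau_m}_{b_m} \contraction{S}{G}{\theta}_\alpha .
\]
Next I would compute the right-hand side. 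Since $\tau \circ \theta = [\tau_1, \dots, \tau_m, \theta]_\bullet$ has $m+1$ subtrees attached to its black root, part (b) of Lemma \ref{lemma_tree_bracket_calc} gives
\[
    \contraction{S}{G}{\tau \circ \theta} = (m+1)! \, S^{b_1 \dots b_m c} \contraction{S}{G}{\tau_1}_{b_1} \dots \contraction{S}{G}{\tau_m}_{b_m} \contraction{S}{G}{\theta}_c .
\]
Comparing the two expressions, the combinatorial factors $(m+1)!$ match exactly, and the two sides coincide after relabelling $\alpha \leftrightarrow c$ and invoking the symmetry of the coefficients $S^{a_1 \dots a_{m+1}}$ in their indices to identify $S^{\alpha b_1 \dots b_m}$ with $S^{b_1 \dots b_m \alpha}$.

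The calculation is genuinely routine, so I do not expect a substantive obstacle; the conceptual point to keep straight is that the free upper index $\alpha$ on $\contraction{S}{G}{\tau}$ and the free lower index $\alpha$ on $\contraction{S}{G}{\theta}$ — each representing a half-connected edge — fuse under the Butcher product into the single summed pair sitting on the root coefficient $S^{\dots c}$ of $\tau \circ \theta$, which is precisely the edge newly created by gluing the two hooks. The one hypothesis that must be used explicitly is the symmetry of the $S$-coefficients, which lets the incoming index occupy the last slot of $S$ in the same way part (b) produces it.
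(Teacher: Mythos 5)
Your proposal is correct and takes essentially the same route as the paper: the paper phrases the argument as an induction on $\tau$ with the cases $\tau = \multimapdotinv$ and $\tau = [\tau_1,\dots,\tau_m]_\multimapdotinv$ treated separately via parts (d) and (b) of Lemma \ref{lemma_tree_bracket_calc}, which is exactly your computation with the $m=0$ case folded in. Your explicit remark that the symmetry of the coefficients $S^{a_1 \dots a_{m+1}}$ is what lets the glued index occupy the last slot is a correct and slightly more careful accounting of a point the paper leaves implicit.
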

\begin{proof}
    Proceed by induction on $\tau$. If $\tau = \multimapdotinv$, then $\tau \circ \theta = [\theta]_\bullet$ so
    \begin{align*}
        \contraction{S}{G}{\tau \circ \theta} & = \varphi^{\alpha} \contraction{S}{G}{\theta}_\alpha \\ 
        & = \contraction{S}{G}{\tau}^{\alpha} \contraction{S}{G}{\theta}_\alpha  .
    \end{align*}
    If $\tau = [\tau_1, \dots, \tau_m]_\multimapdotinv$, then 
    \begin{align*}
        \contraction{S}{G}{\tau}^{\alpha} \contraction{S}{G}{\theta}_\alpha & = (m+1)! S^{\alpha b_1 \dots b_m} \contraction{S}{G}{\tau_1}_{b_1} \dots \contraction{S}{G}{\tau_m}_{b_m}  \contraction{S}{G}{\theta}_\alpha \\
        & = \contraction{S}{G}{\tau \circ \theta},
    \end{align*}
    as required.
\end{proof}

\begin{lemma}\label{lemma_sym_frac}
    Let $t = (B_t,W_t,E_t) \in T$. Given a vertex of any colour $v \in V_t$, denote by $t_v \in RT$ the rooted bipartite tree with chosen root $v$. Then, for any $v \in V_t$ we have
    \begin{equation*}
        \frac{|sym(t)|}{|sym(t_v)|} = |\{ v' \in V_t | t_{v'} ~\text{is isomorphic (as a rooted tree) to}~ t_v\}| =: k(t,v), 
    \end{equation*}
    and for any $e = (u,v) \in E_t$ we have
    \begin{equation*}
        \frac{|sym(t)|}{|sym(t_u)||sym(t_v)|} = |\{ e'=(u', v') \in E_t | t_{u'} \sqcup t_{v'} ~\text{is isomorphic to}~ t_u \sqcup t_v\ \}| =: l(t,e).  
    \end{equation*}
\end{lemma}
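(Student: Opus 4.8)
The plan is to deduce both identities from the orbit--stabilizer theorem, applied to the natural action of the automorphism group $G := sym(t)$ on the vertex set $V_t$ for the first identity and on the edge set $E_t$ for the second. Throughout I would use that every automorphism of a bipartite tree is colour-preserving, sending black vertices to black and white to white.

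For the vertex identity, I would let $G$ act on $V_t$ and observe that the stabilizer of $v$ is exactly $sym(t_v)$, since an automorphism of $t$ fixing $v$ is the same thing as an automorphism of the rooted tree $t_v$. The remaining step is to identify the orbit $G\cdot v$ with the set counted by $k(t,v)$: in one direction, $v'=\phi(v)$ makes $\phi$ a rooted isomorphism $t_v \xrightarrow{\cong} t_{v'}$; in the other, any rooted isomorphism $t_v \xrightarrow{\cong} t_{v'}$ is an automorphism of the underlying $t$ carrying $v$ to $v'$. Hence $G\cdot v = \{v' \in V_t : t_{v'}\cong t_v\}$, and orbit--stabilizer gives $k(t,v) = |G\cdot v| = |sym(t)|/|sym(t_v)|$ at once.

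For the edge identity, I would first fix the reading that, for an edge $e=(u,v)$, the rooted trees $t_u$ and $t_v$ are the two branches obtained by deleting $e$ (rooted at $u$ and $v$ respectively); this is the interpretation under which the left-hand side is an integer and which matches the cut of the connecting edge in a Butcher product. I would then let $G$ act on $E_t$ and compute the stabilizer of $e$. Here bipartiteness enters: since $u$ and $v$ carry opposite colours and $\phi$ preserves colour, any $\phi$ fixing $e$ must fix $u$ and $v$ separately, hence preserve each branch; restricting $\phi$ yields a rooted automorphism of each branch, and conversely any pair of branch automorphisms glues along $e$. This identifies $\mathrm{Stab}_G(e) \cong sym(t_u)\times sym(t_v)$, so $|\mathrm{Stab}_G(e)| = |sym(t_u)|\,|sym(t_v)|$. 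As in the vertex case I would then identify the orbit: $e'=\phi(e)$ restricts to branch isomorphisms giving $t_{u'}\sqcup t_{v'}\cong t_u\sqcup t_v$, and conversely, since the roots $u,u'$ are black and $v,v'$ white, any such disjoint-union isomorphism must split into separate rooted isomorphisms $t_u\cong t_{u'}$ and $t_v\cong t_{v'}$, which glue along the edges $e$ and $e'$ into an automorphism of $t$ taking $e$ to $e'$. Orbit--stabilizer then yields $l(t,e) = |sym(t)|/\bigl(|sym(t_u)|\,|sym(t_v)|\bigr)$.

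The two orbit-identification claims are routine restriction/gluing arguments. The one point requiring care, and the main obstacle, is the structure of the edge stabilizer: one must verify it is the direct product $sym(t_u)\times sym(t_v)$ rather than a larger extension. This is precisely where bipartiteness is essential: the two endpoints of $e$ have different colours, so no automorphism can interchange the two branches, which both rules out a branch-swap in the stabilizer and makes the gluing of the two branch isomorphisms along $e$ unambiguous. (Were the endpoints allowed to share a colour, the stabilizer could acquire an extra factor of $2$ and the clean product formula would fail.)
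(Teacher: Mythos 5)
Your proof is correct and complete. Note that the paper does not actually prove this lemma in text --- it defers entirely to Cattaneo--Dherin--Felder (\cite{operad}, Lemma 4) --- so your orbit--stabilizer argument is supplying the missing standard proof rather than diverging from one. The two substantive points are exactly the ones you isolate: the stabilizer identifications $\mathrm{Stab}(v)=sym(t_v)$ and $\mathrm{Stab}(e)\cong sym(t_u)\times sym(t_v)$, with colour-preservation of automorphisms ruling out any branch swap in the edge case. Your insistence on reading $t_u,t_v$ in the edge statement as the two \emph{branches} obtained by deleting $e$ (rather than the whole tree re-rooted at $u$ and $v$, as the vertex-case notation would literally suggest) is not merely a convention choice but a necessary correction: for the tree with one black vertex joined to two white leaves one has $|sym(t)|=2$ and, under the whole-tree reading, $|sym(t_u)||sym(t_v)|=2\cdot 1$, so the quotient is $1$ while $l(t,e)=2$. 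The branch reading gives $2/(1\cdot 1)=2=l(t,e)$, and it is also the reading forced by the application in the proof of Proposition \ref{prop_general_eq_graph}, where the edge sum replaces the sum over Butcher-product decompositions $\tau\circ\theta=t$ and $\sigma(t_u)\sigma(t_v)$ stands for $\sigma(\tau)\sigma(\theta)$.
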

\begin{proof}
    See \cite{operad} Lemma 4, pg 24. 
\end{proof}

\begin{proof}[Proof of Proposition \ref{prop_general_eq_graph}]

By Lemma \ref{lemma_H_calc_1} and Lemma \ref{lemma_eldiff_circ} we have
\begin{align*}
    R(S,G) & = \sum_{t \in [RT^\circ]} \frac{1}{\sigma(t)} \contraction{S}{G}{t} - \sum_{\tau\in[RT^\circ_\bullet]} \sum_{\theta\in[RT^\circ_\circ]} \frac{1}{\sigma(\tau) \sigma(\theta)}\contraction{S}{G}{\tau}^{\alpha} \contraction{S}{G}{\theta}_\alpha \\
    & = \sum_{t \in [RT^\circ]} \contraction{S}{G}{t} \left( \frac{1}{\sigma(t)}  - \sum_{\substack{\tau\in[RT^\circ_\bullet] , ~ \theta\in[RT^\circ_\circ] \\ \tau \circ \theta = t}}  \frac{1}{\sigma(\tau)\sigma(\theta)} \right) .
\end{align*}

Now note that when $\tau = \bullet$, the Butcher product $\tau \circ \theta$ is just $[\theta]_\bullet$ and $\sigma(t) = \sigma(\tau \circ \theta) = \sigma([\theta]_\bullet) = \sigma(\theta) = \sigma(\bullet)\sigma(\theta)$. Therefore, if the root of $t$ is black and has degree one, so that $t = \bullet \circ \theta$ for a unique $\theta\in[RT^\circ_\circ]$, the resulting term is zero. We have reduced our sum to be over the set $[RB] := [RT^\circ] - \{ \bullet \} \circ [RT^\circ_\circ]$. Crucially, the set $[B]$ of unrooted trees in $[RB]$ is equal to $[T^\circ]$. So get 
\begin{align*}
    R(S,G) & = \sum_{t \in [RB]} \contraction{S}{G}{t} \left( \frac{1}{\sigma(t)}  - \sum_{\substack{\tau\in[RT^\circ_\bullet] , ~ \theta\in[RT^\circ_\circ] \\ \tau \circ \theta = t}}  \frac{1}{\sigma(\tau)\sigma(\theta)} \right) \\ 
    & = \sum_{t \in [B]} \contraction{S}{G}{t} \left(  \sum_{v \in V_t} \frac{1}{\sigma(t_v)} \frac{1}{k(t,v)} -  \sum_{e = (u,v) \in E_t} \frac{1}{\sigma(t_u)\sigma(t_v)} \frac{1}{l(t,e)}   \right) \\
    & = \sum_{t \in B} \frac{1}{|t|_\circ! |t|_\bullet!} \contraction{S}{G}{t} \left(  \sum_{v \in V_t} \frac{|sym(t)|}{|sym(t_v)|} \frac{1}{k(t,v)} -  \sum_{e = (u,v) \in E_t} \frac{|sym(t)|}{|sym(u)||sym(v)|} \frac{1}{l(t,e)}   \right) \\
    & = \sum_{t \in T^\circ} \frac{1}{|t|_\circ! |t|_\bullet!} \contraction{S}{G}{t} ,
\end{align*}
as required. For the third equality, we used that the number of trees isomorphic to $t$ is equal to $(|t|_\circ! |t|_\bullet!)/|sym(t)|$. The last equality follows from Lemma \ref{lemma_sym_frac} and that the number of edges in a tree is always one less than the number of vertices.

\end{proof}

We will now apply Proposition \ref{prop_general_eq_graph} to the three cases of interest, starting with pullbacks.

\subsection{Graphical Calculus for Pullbacks by Thick Morphisms}\label{sec_pullback_graph}

Consider the pullback of $g\in C^\infty(M_2)$ by $\Phi : M_1 \text{\ding{225}} M_2$, as defined by equations \eqref{eq_pullback_1} and \eqref{eq_pullback_2}. To apply Proposition \ref{prop_general_eq_graph} to pullbacks we need only replace $G$ with $g$ and reintroduce the dependence of the coefficients of $S$ on $x$, which makes no difference to the formulas. For clarity we present the specific formulas that result. 

\begin{definition}
    Given a generating function $S(x,q) = S_0(x) + \varphi^i(x)q_i + S^{ij}(x)q_iq_j + \dots$, a smooth function $g\in C^\infty(M_2)$, and a bipartite tree $t = (B_t, W_t, E_t)$, we define
    \begin{equation}\label{eq_pullback_contraction_def}
        \contraction{S}{g}{t}(x) := \left( \prod_{v \in B_t} m_v! S^{e^v_1, \dots, e^v_{m_v}}(x) \right) \left( \prod_{w \in W_t} \dely{e^w_1} \dots \dely{e^w_{m_w}} g(\varphi(x)) \right),
    \end{equation}
    where for each $v \in V_t$, we denote the elements of $E_v$, the set of edges connected to $v$, by $e^v_1, \dots, e^v_{m_v}$, and the labels $e^v_k$ are used as dummy indices to be summed over by the Einstein summation convention.  
\end{definition}

Then by Proposition \ref{prop_general_eq_graph}, the pullback of $g$ by $\Phi$ is given by  
\begin{equation}\label{eq_pullback_expansion}
    \Phi^*[g](x) = \sum_{t \in T^\circ} \frac{1}{|t|_\circ ! |t|_\bullet !} \contraction{S}{g}{t}(x) = \sum_{t \in [T^\circ]} \frac{1}{|sym(t)|} \contraction{S}{g}{t}(x) .
\end{equation}
Note that the order of $g$ in $\contraction{S}{g}{t}$ is given by the number of white vertices $|t|_\circ$.

\subsection{Graphical Calculus for Compositions of Thick Morphisms}\label{sec_comp_graph}

Consider the composition of thick morphisms $\Phi_H = \Phi_G \circ \Phi_F: M_1 \text{\ding{225}} M_3$ defined by equations \eqref{eq_thick_comp_1} and \eqref{eq_thick_comp_2}. To apply Proposition \ref{prop_general_eq_graph} we simply exchange $F(x,q)$ for $S(q)$ again simply introducing the dependence of the coefficients of $F$ on $x$ and $G$ on the momentum coordinates $r$ on $M_3$. Now by Proposition \ref{prop_general_eq_graph} we get an expansion almost identical to that for pullbacks. However since $G(y,r)$ is a formal power series in $r$, we'd like to expand over the terms in $G$, thereby obtaining an expansion of the generating function $H(x,r)$ as a formal power series in $r$. This is achieved by adding "weights" to the white-vertices of the trees.

\begin{definition}
   A white-weighted bipartite tree is a bipartite tree $t = (B_t, W_t, E_t)$ with a map $L_t: W_t \rightarrow \mathbb{Z}_{\ge 0}$. An isomorphism of white-weighted bipartite trees is an isomorphism of bipartite trees $(\sigma_\bullet,\sigma_\circ): t \xrightarrow{\cong} t'$ such that $L_{t'}(\sigma_\circ(v)) = L_t(v)$ for all $v \in W_t$. For any set of trees $A$, we use brackets $[A]$ to denote the trees up to isomorphism, and the subscript $A_{\infty/2}$ to denote the white-weighted versions of trees in $A$. Following \cite{operad} we abuse notation and write $[A]_{\infty/2}$ instead of $[A_{\infty/2}]$. We define $\|t\|_\circ$ to be the sum of the weights of all the white vertices of t. 
\end{definition}

We now define a multi-index $I_t$ for each white-weighted bipartite tree $t$. This keeps track of the indices of $G$ that contract with the momenta $r$ in the expansion.
\begin{definition}[The multi-index $I_t$]\label{def_multi_index_I}
    For any white-weighted bipartite tree $t = (B_t, W_t, E_t, L_t)$, we write $\ell_{w} = L_t(w)$ for the weights associated to each vertex $w \in W_t$ and define
    \begin{equation*}
        I_t := \prod_{w \in W_t} a_{w, 1} \dots a_{w, \ell_w} .
    \end{equation*}
\end{definition} 
\noindent Each $a_{w,k}$ is just a label whose purpose is to be contracted with a momentum factor $r_{a_{w,k}}$ according to the Einstein summation convention. Note that the length of $I_t$ is just $\|t\|_\circ$. 

We can now write down the formula for terms specific to the composition of thick morphisms.
\begin{definition}
    Given generating functions $F(x, q) = F_0(x) + \varphi^i(x)q_i + F^{ij}(x)q_iq_j + \dots$ and
    $G(y,r) = G_0(y) + \gamma^i(y)r_i + G^{ij}(y)r_ir_j + \dots$, and a white weighted bipartite tree $t = (B_t, W_t, E_t, L_t)$, we define
    \begin{equation}\label{eq_comp_contraction_def}
        \contraction{F}{G}{t}^{I_t}(x) := \left( \prod_{v \in B_t} m_v! F^{e^v_1, \dots, e^v_{m_v}(x)} \right) \left( \prod_{w \in W_t} \dely{e^w_1} \dots \dely{e^w_{m_w}} G^{a_{w, 1} \dots a_{w, \ell_w}}(\varphi(x)) \right),
    \end{equation}
    where for each $v \in V_t$, we denote the elements of $E_v$, the set of edges connected to $v$, by $e^v_1, \dots, e^v_{m_v}$, and the labels $e^v_k$ are used as dummy indices to be summed over by the Einstein summation convention.  
\end{definition}

Then by Proposition \ref{prop_general_eq_graph} and the linearity of partial derivatives, we get the following expansion for $H(x,r)$ as a formal power series in $r$
\begin{equation}\label{eq_comp_expansion}
        H(x,r) = \sum_{t \in T_{\infty/2}^\circ} \frac{r_{I_t}}{|t|_\circ! |t|_\bullet!} \contraction{F}{G}{t}^{I_t}(x) = \sum_{t \in [T^\circ]_{\infty/2}} \frac{r_{I_t}}{|sym(t)|} \contraction{F}{G}{t}^{I_t}(x),
    \end{equation}
where we have used the shorthand $r_I = r_{i_1} \dots r_{i_n}$ for any multi-index $I = (i_1,\dots,i_n)$.

\subsection{Graphical Calculus for Coordinate Transformations of Thick Morphisms}\label{sec_transformation_graph}

Consider an invertible change of local coordinates on $M_1\times M_2$ given by $ $$x = x(x')$, $y = y(y')$, with $p', q'$ the corresponding conjugate momenta, under which the generating function for a thick morphism $S(x,q)$ in the "old" coordinates transforms by equations \eqref{eq_thick_transformation_1} and \eqref{eq_thick_transformation_2}. To apply Proposition \ref{prop_general_eq_graph} we first replace $G(y)$ with $y^{i'}(y)q_{i'}$, then reintroduce the dependence of the coefficients of $S$ on $x$, and use the substitution $x = x(x')$. To obtain the expansion as a power series in $q'$, we also pull the $q'$ factors outside of the derivatives by $y$, keeping track of which factors contract with which $y'$ factors by introducing a simpler multi-index $\alpha_t$ for any bipartite tree.

\begin{definition}[The multi-index $\alpha_t$]\label{def_multi_index_alpha}
    For bipartite tree $t = (B_t, W_t, E_t)$, we define
    \begin{equation*}
        \alpha_t := \prod_{w \in W_t} a_w .
    \end{equation*}
\end{definition}
\noindent Each $a_w$ is just a label whose purpose is to be contracted with a momentum factor $q'_{a_w}$ according to the Einstein summation convention. Note that the length of $\alpha_t$ is just $|t|_\circ$.

We can now write down the formula for terms resulting from coordinate transformations.
\begin{definition}
    Given a generating function $S(x,q) = S_0(x) + \varphi^i(x)q_i + S^{ij}(x)q_iq_j + \dots$, a change of coordinates $x = x(x')$, $y = y(y')$, and a bipartite tree $t = (B_t, W_t, E_t)$, we define
    \begin{equation}\label{eq_transformation_contraction_def}
        \contraction{S}{y'}{t}^{\alpha_t}(x') := \left( \prod_{v \in B_t} m_v! S^{e^v_1, \dots, e^v_{m_v}}(x(x')) \right) \left( \prod_{w \in W_t} \dely{e^w_1} \dots \dely{e^w_{m_w}} y^{a_w'}(\varphi(x(x'))) \right),
    \end{equation}
    where for each $v \in V_t$, we denote the elements of $E_v$, the set of edges connected to $v$, by $e^v_1, \dots, e^v_{m_v}$, and the labels $e^v_k$ are used as dummy indices to be summed over by the Einstein summation convention.  
\end{definition}

Then by Proposition \ref{prop_general_eq_graph}, and by pulling out all $q'$ factors from any partial derivatives with respect to $y$, noting that there will be one factor of $q'$ for each white vertex in a tree, we obtain the expansion for the generating function $S'(x',q')$ in the "new" coordinates
\begin{equation}
    S'(x',q') = \sum_{t \in T^\circ} \frac{q'_{\alpha_t}}{|t|_\circ! |t|_\bullet!} \contraction{S}{y'}{t}^{\alpha_t}(x') = \sum_{t \in [T^\circ]} \frac{q'_{\alpha_t}}{|sym(t)|} \contraction{S}{y'}{t}^{\alpha_t}(x') .
\end{equation}

\section{The Graphical Calculus for Quantum Thick Morphisms}\label{sec_quantum_graphical_calc}

We would now like to extend the graphical calculus to quantum thick morphisms. In \cite{Voronov_microformal_homotopyalg} Theorem 10, Voronov showed that classical thick morphisms are in some sense the classical limit of quantum thick morphisms (taking $\hbar \rightarrow 0$). So we might expect the graphical calculus in the classical case to be a classical limit of the quantum case, and this is indeed what we find. In the classical case we sum over trees, that is graphs with no cycles. In the quantum case we sum over all graphs, with cycles contributing factors of $\hbar$. 

\subsection{Preliminaries on Quantum Thick Morphisms}\label{sec_qprelim}

\begin{definition}
    Let $M_1$ and $M_2$ be supermanifolds. A quantum generating function $S_\hbar(x,q)$, associated to canonical coordinate systems $x^a, p_a$ and $y^i, q_i$ on $T^*M_1$ and $T^*M_2$, is a formal power series in $q$ with coefficients that are formal power series in $\hbar$,
    \begin{equation*}
        S_\hbar(x,q) = S^0_\hbar(x) + \varphi^i_\hbar(x)q_i + S^{ij}_\hbar(x)q_jq_i + S^{ijk}_\hbar(x)q_kq_jq_i + \dots .
    \end{equation*}
    A quantum thick morphism $\Phi : M_1 \text{\ding{225}}_q M_2$ is given by a collection of quantum generating functions in each coordinate system, with the transformation law given below. Quantum thick morphisms are identified with their pullback action on functions, which we also give below.
\end{definition}

Again we constrain ourselves temporarily to the setting of regular (non-super) manifolds for simplicity. 

We now recall the formulae governing pullbacks, compositions, and changes of coordinates for quantum thick morphisms. In each case let $M_1, M_2, M_3$ be manifolds, and let $\Phi : M_1 \text{\ding{225}}_q M_2$, $\Phi_F: M_1 \text{\ding{225}}_q M_2$ and $\Phi_G: M_2 \text{\ding{225}}_q M_3$ be quantum thick morphisms between them. Choosing suitable local coordinates $x, y, z$ on $M_1, M_2, M_3$ respectively with corresponding conjugate momenta $p, q, r$, we have the following generating functions for $\Phi$, $\Phi_F$ and $\Phi_G$ given as formal power series in the momenta.
\begin{align*}
    S_{\hbar}(x,q) & = S^0_{\hbar}(x) + \varphi^i_{\hbar}(x)q_i + S^{ij}_{\hbar}(x)q_iq_j + \dots \\ 
    F_{\hbar}(x, q) & = F^0_{\hbar}(x) + \varphi^i_{\hbar}(x)q_i + F^{ij}_{\hbar}(x)q_iq_j + \dots \\
    G_{\hbar}(y,r) & = G^0_{\hbar}(y) + \gamma^i_{\hbar}(y)r_i + G^{ij}_{\hbar}(y)r_ir_j + \dots .
\end{align*}
The subscripts of $\hbar$ indicate that these terms are formal power series in $\hbar$. Given these generating functions we then have the following equations.
\begin{q_pullback_law}
    For any oscillatory wave function $w \in OC^\infty_\hbar(M_2) := C^\infty(M_2)[[\hbar]] \exp (\frac{i}{\hbar} C^\infty(M_2))$, the pullback $(\hat{\Phi}^* w)(x)$ is given by
    \begin{equation}\label{eq_qpullback}
        (\hat{\Phi}^* w)(x) = \int_{T^*M_2} Dy \text{\textit{\DJ}} q \, \expih{(S_{\hbar}(x,q) - y^\alpha q_\alpha)} w(y) ,
    \end{equation}
    where $\text{\textit{\DJ}} q := (2\pi \hbar)^{-d_0}(i\hbar)^{d_1}D q$, and $d_0|d_1$ is the dimension of $M_2$. 
\end{q_pullback_law}

\begin{q_composition_law}
    The composition $\hat{\Phi}_H = \hat{\Phi}_G \circ \hat{\Phi}_F: M_1 \text{\ding{225}}_q M_3$ is a quantum thick morphism and has generating function $H_{\hbar}(x,r)$ given by
    \begin{equation}\label{eq_qthick_comp}
        \expih{H_{\hbar}(x,r)} = \int_{T^*M_2} Dy \text{\textit{\DJ}} q \, \expih{(G_{\hbar}(y,r) + F_{\hbar}(x,q) - y^\alpha q_\alpha)}.
    \end{equation}
\end{q_composition_law}

\begin{q_transformation_law}
    Let $x = x(x')$, $y = y(y')$ be an invertible change of local coordinates, with $p', q'$ the corresponding conjugate momenta. If $S(x,q)$ is the generating function for $\Phi$ in the "old" coordinate system $(x,p,y,q)$, then the generating function $S'(x',q')$ in the "new" coordinate system $(x',p',y',q')$ satisfies 
    \begin{equation}\label{eq_qthick_transformation}
         \expih{S'_{\hbar}(x',q')} = \int_{T^*M_2} Dy \text{\textit{\DJ}} q \, \expih{(S_{\hbar}(x(x'),q) + y^{\alpha'}(y)q_{\alpha'} - y^\alpha q_\alpha)}.
    \end{equation}
\end{q_transformation_law}

For pullbacks we now restrict ourselves to the case $w(y) = \expih{g(y)}$. Then all three equations share the same form, so we again write down a general equation capturing this form, which we will again solve using a graphical calculus described in section \ref{sec_qgen_eq_solve}.

\begin{q_general_eq}
     Consider a formal power series $S(q) = S_0 + \varphi^iq_i + S^{ij}q_iq_j+ \dots$, and a smooth function $G(y) : \mathbb{R}^{d} \rightarrow \mathbb{R}$. The coefficients of $S$ and the function $G$ may depend on other parameters which we ignore to simplify notation (in our cases the coefficients of $S$ depend on $x$ and $G$ is a formal power series in either $r$ or $q_{\alpha'}$, and both $S$ and $G$ are formal power series in $\hbar$). We are interested in the result of the following integral
        \begin{equation}\label{eq_qgeneral_eq}
            I(S,G) =  \int Dy \text{\textit{\DJ}} q \, \expih{(S(q) + G(y) - y^\alpha q_\alpha)},
        \end{equation}
        where the integration is over $\mathbb{R}^d$ and its dual. 
\end{q_general_eq}

\subsection{Solving the Quantum General Equation}\label{sec_qgen_eq_solve}

We first rewrite the integral in equation \eqref{eq_qgeneral_eq} as the application of a formal $\hbar$-differential operator (for details on formal $\hbar$-differential operatos see \cite{SHEMYAKOVA_BV}) evaluated at the point $y^i = \varphi^i$ defined by the first order coefficients in $S$. This is the generalisation of \cite{Voronov_microformal_homotopyalg} Theorem 11 to the Quantum General Equation.
\begin{lemma}\label{lemma_qgen_eq_exp}
    The integral $I(S,G)$ can be expressed as
    \begin{equation}\label{eq_qgen_eq_exp}
        I(S,G) = \expih{S^0} \left( \expih{S^+(\frac{\hbar}{i}\dely{}) } \expih{G(y)} \right)_{\Big\arrowvert{y^i = \varphi^i}} ,
    \end{equation}
    where $S^+(q)$ is the sum of all the terms of greater than first order in $q$ in $S(q)$.
\end{lemma}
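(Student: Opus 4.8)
We want to show that the oscillatory integral
$$I(S,G) = \int Dy\, \text{\DJ} q\, \exp\!\left(\tfrac{i}{\hbar}(S(q) + G(y) - y^\alpha q_\alpha)\right)$$
equals $\exp(\tfrac{i}{\hbar}S^0)$ times the differential operator $\exp(\tfrac{i}{\hbar}S^+(\tfrac{\hbar}{i}\partial_y))$ applied to $\exp(\tfrac{i}{\hbar}G(y))$ and evaluated at $y = \varphi$. Here $S^+$ collects the terms of order $\geq 2$ in $q$, and $\varphi$ comes from the linear coefficients of $S$. The key mechanism is that each momentum variable $q_\alpha$ in the phase, after integrating out $y$, becomes the differential operator $\tfrac{\hbar}{i}\partial_{y^\alpha}$ acting on functions of $y$.

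**The plan.**

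First, I would split $S(q) = S^0 + \varphi^\alpha q_\alpha + S^+(q)$, so the phase in the exponent becomes $S^0 + \varphi^\alpha q_\alpha + S^+(q) + G(y) - y^\alpha q_\alpha$. Pull the constant $\exp(\tfrac{i}{\hbar}S^0)$ out of the integral immediately. I would then recognize that $S^+(q)$ is a formal power series in $q$ starting at quadratic order; write $\exp(\tfrac{i}{\hbar}S^+(q))$ as a formal series in $q$ and interchange it with the integral (legitimate since we work with formal power series in $\hbar$ and in the coefficients of $G$). This reduces the problem to controlling, for each monomial in $q$, the integral
$$\int Dy\, \text{\DJ} q\; q_{\alpha_1}\cdots q_{\alpha_k}\, \exp\!\left(\tfrac{i}{\hbar}\big(\varphi^\alpha q_\alpha - y^\alpha q_\alpha + G(y)\big)\right).$$

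**The central computation.** The core identity I would establish is that integrating out $q$ against $\exp(\tfrac{i}{\hbar}(\varphi^\alpha - y^\alpha)q_\alpha)$ produces a delta-type kernel forcing $y = \varphi$, and that each extra factor $q_\alpha$ in the integrand converts into the operator $\tfrac{\hbar}{i}\partial_{y^\alpha}$. Concretely, I would use the operator identity
$$q_\alpha\, \exp\!\left(\tfrac{i}{\hbar}(\varphi - y)^\beta q_\beta\right) = \tfrac{\hbar}{i}\,\frac{\partial}{\partial\varphi^\alpha}\exp\!\left(\tfrac{i}{\hbar}(\varphi - y)^\beta q_\beta\right),$$
pulling the $\tfrac{\hbar}{i}\partial_{\varphi^\alpha}$ (equivalently $-\tfrac{\hbar}{i}\partial_{y^\alpha}$, up to integration by parts in $y$) outside the $q$-integral. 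After doing this for every $q$-factor, the remaining $q$-integral over $\exp(\tfrac{i}{\hbar}(\varphi - y)q)$ against the normalization $\text{\DJ} q = (2\pi\hbar)^{-d_0}(i\hbar)^{d_1}Dq$ yields the delta function $\delta(y - \varphi)$; the $y$-integral then simply evaluates the accumulated derivatives of $\exp(\tfrac{i}{\hbar}G(y))$ at $y = \varphi$. Reassembling the series in $q$, each $q_{\alpha_1}\cdots q_{\alpha_k}$ has become $(\tfrac{\hbar}{i})^k\partial_{y^{\alpha_1}}\cdots\partial_{y^{\alpha_k}}$, which is exactly the substitution $q \mapsto \tfrac{\hbar}{i}\partial_y$ inside $S^+$, giving $\exp(\tfrac{i}{\hbar}S^+(\tfrac{\hbar}{i}\partial_y))$ acting on $\exp(\tfrac{i}{\hbar}G(y))\big|_{y=\varphi}$.

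**The main obstacle.** The hard part is making the delta-function / stationary-phase manipulation rigorous in the formal sense the paper uses, rather than as a convergent integral. I would handle this by treating everything as formal power series in $\hbar$: the Gaussian-free oscillatory integral $\int Dy\,\text{\DJ} q\,\exp(\tfrac{i}{\hbar}(\varphi - y)q)\,f(y)$ is \emph{defined} to be $f(\varphi)$, which is precisely the content of the normalization convention for $\text{\DJ} q$ and the reference to formal $\hbar$-differential operators (\cite{SHEMYAKOVA_BV}). The symmetry assumption on the coefficients $S^{\alpha_1\cdots\alpha_k}$ ensures the index bookkeeping in $S^+(\tfrac{\hbar}{i}\partial_y)$ is unambiguous. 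A secondary subtlety is justifying the term-by-term interchange of the $\exp(\tfrac{i}{\hbar}S^+)$ series with the integral and the commuting of the derivatives past the $q$-integral; both are routine in the formal power series setting, since only finitely many terms contribute at each fixed order in $\hbar$ and in the order of $G$.
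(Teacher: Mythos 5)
Your proposal is correct and follows essentially the same route as the paper's proof (which itself just reproduces the argument of Theorem 11 in Voronov's paper): split off $S^0 + \varphi^i q_i$, recognize the $Dy$-integral as a Fourier transform of $\expih{G(y)}$ and the $\text{\textit{\DJ}}q$-integral as the inverse transform evaluated at $y=\varphi$, so that multiplication by $q_i$ becomes $\frac{\hbar}{i}\dely{i}$. Your term-by-term version, with the explicit operator identity and the delta-function normalization, is just a more detailed unpacking of the same mechanism.
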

\begin{proof}
    The proof is the same as for \cite{Voronov_microformal_homotopyalg} Theorem 11. We have 
        \begin{equation*}
    S(q) = S^0 + \varphi^i q_i + S^+(q),
    \end{equation*}
    which we substitute into equation \eqref{eq_qgeneral_eq} to get
    \begin{align*}
        I(S,G) & = \int Dy \text{\textit{\DJ}} q \, \expih{(S^0 + \varphi^i q_i + S^+(q) + G(y) - y^\alpha q_\alpha)} \\
        & = \expih{S^0} \int \text{\textit{\DJ}} q \, \expih{\varphi^i q_i} \expih{S^+(q)} \int Dy \, e^{ - \frac{i}{\hbar} y^\alpha q_\alpha} \expih{G(y)} .
    \end{align*}
     The $Dy$ integral is the Fourier transform of the function $\expih{G(y)}$ from the variables $y^i$ to the variables $q_i$. This Fourier transform is then multiplied by $\expih{S^+(q)}$, which is just a function of $q$. The $\text{\textit{\DJ}} q$ integral performs the inverse Fourier transform from $q_i$ back to $y^i$ but with $\varphi^i$ substituted for $y^i$. Under this inverse Fourier transform, the multiplications by $q_i$ terms in $\expih{S^+(q)}$ become partial derivatives with respect to $y^i$ and the result follows.  
\end{proof}

We now see that all terms inside the brackets of equation \eqref{eq_qgen_eq_exp} (ignoring $\hbar$ and $i$ factors for now) consist of collections of differential operators of the form $S^{a_1 \dots a_m} \dely{a_1} \dots \dely{a_m}$, with $m \ge 2$, acting on the exponential of $G(y)$. Using the chain rule, we see that the first derivative brings down a factor of $\dely{\alpha}G$, for some index $\alpha$ which is contracted with and index on a term of $S$. Then by the product rule, the further derivatives split into sums, and in each summand the derivative can either bring down extra factors of $\dely{\beta}G$, where $\beta$ is another contracted index, or add derivatives $\dely{\beta}$ to existing "downstairs" factors of $G$. 

The resulting terms can be represented by an extension of the Feynman rules for classical thick morphisms to all bipartite graphs, including disconnected graphs, and not just trees. For example, the same term of $S$ can contract with derivatives acting on the same factor from $G$, resulting in a loop. Turning this observation into a proper graphical calculus involves a fairly simple combinatorial argument which is laid out in the following lemma and proposition. 

We denote by $\Gamma$ the set of all connected bipartite graphs, by $\overline{\Gamma}$ the set of all bipartite graphs (includes the disconnected graphs). 
\begin{definition}
    To each bipartite graph $\gamma = (B_\gamma, W_\gamma, E_\gamma)$ we associate a term $\contraction{S}{G}{\gamma}$ by the following. For each vertex of any colour $v \in V_\gamma$, let $E_v = \{ e \in E_\gamma ~ | ~ \text{$e$ is connected to $v$}\}$ and denote the elements of $E_v$ by $e^v_1, \dots, e^v_{m_v}$. We then use the labels $e^v_k$ as dummy indices to be summed over by the Einstein summation convention and define
    \begin{equation}\label{eq_qcontraction_def}
        \contraction{S}{G}{\gamma} := \left( \prod_{v \in B_\gamma} \frac{m_v!}{(1!)^{k^v_1} (2!)^{k^v_2} \dots (m_v!)^{k^v_{m_v}}} S^{e^v_1, \dots, e^v_{m_v}} \right) \left( \prod_{w \in W_\gamma} \dely{e^w_1} \dots \dely{e^w_{m_w}} G(\varphi) \right),
    \end{equation}
    where again $m_v$ is the number of edges connected to the vertex $v$ and for each $i = 1, \dots m_v$, we let $k^v_i$ be the number of white vertices with which $v$ shares $i$ edges. Note that if the graph is a tree, then this reduces to the formula in equation \ref{eq_contraction_def}, since $v$ can connect with any white vertex only once.
\end{definition}
The Feynman-esque rules encoding this formula are the same as those in section \ref{sec_trees_and_Feynman_rules}, except that for each black vertex $v$ in a bipartite graph, we now write a factor 
\begin{equation}\label{eq_Feynman_rules_extension}
    \frac{m_v!}{(1!)^{k^v_1} (2!)^{k^v_2} \dots (m_v!)^{k^v_{m_v}}} S^{* \dots *} ,
\end{equation}
where again the number of upper indices of $S$ is equal to $m_v$. The meaning of this combinatorial factor will become clear in the proof of the next lemma. 

\begin{lemma}\label{lemma_qgen_eq_initial}
    The integral I(S,G) can be written as 
    \begin{equation}\label{eq_qgen_eq_initial}
        I(S,G) = \expih{S^0} \left( 1 + \sum_{\gamma \in [\overline{\Gamma}^\circ] - \{\circ\} } \frac{1}{|sym(\gamma)|}   \left(\frac{\hbar}{i}\right)^{|E_\gamma| - |\gamma|}  \contraction{S}{G}{\gamma}  \right) \expih{G(\varphi)} . 
    \end{equation}
\end{lemma}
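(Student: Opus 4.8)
The plan is to take Lemma~\ref{lemma_qgen_eq_exp} as the starting point, since it already rewrites $I(S,G)$ as $\expih{S^0}$ times $\left(\expih{S^+(\frac{\hbar}{i}\dely{})}\expih{G(y)}\right)$ evaluated at $y^i=\varphi^i$; everything then reduces to expanding the differential operator $\expih{S^+(\frac{\hbar}{i}\dely{})}$ acting on $\expih{G}$ and reading off the coefficients. First I would expand the outer exponential as $\sum_{N\ge 0}\frac{1}{N!}\big(\frac{i}{\hbar}S^+(\frac{\hbar}{i}\dely{})\big)^N$ and expand each factor $S^+$ into its homogeneous pieces of degree $m\ge 2$. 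Each such piece contributes one black vertex carrying the coefficient $S^{a_1\dots a_m}$, a string of derivatives $\dely{a_1}\dots\dely{a_m}$, and, after combining the overall $\frac{i}{\hbar}$ with the $(\frac{\hbar}{i})^{m}$ from the rescaled derivatives, a power $(\frac{\hbar}{i})^{m-1}$. Thus $N=|B_\gamma|$ counts black vertices, each of degree $m_v\ge 2$.

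Next I would evaluate the action of the resulting product of first-order derivatives on $\expih{G}$ via the multivariate Fa\`a di Bruno (exponential) formula: the answer is $\expih{G}$ times a sum over set partitions of the collection of all derivative slots, where each block $\beta$ produces a factor $\frac{i}{\hbar}\,\dely{\beta}G(\varphi)$ with the derivatives indexed by the slots in $\beta$. Each block is precisely a white vertex, of degree equal to the block size and contributing a single factor $\frac{i}{\hbar}$; drawing an edge between a black vertex $v$ and a white vertex $w$ for every slot of $v$ lying in the block $w$ turns each term into a bipartite multigraph $\gamma$, now possibly carrying parallel edges (loops). Factoring out the global $\expih{G(\varphi)}$ leaves the bracketed expression. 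The net power of $\frac{\hbar}{i}$ is $\sum_{v\in B_\gamma}(m_v-1)-|W_\gamma|$; since in a bipartite graph every edge has a unique black endpoint, $\sum_v m_v=|E_\gamma|$, and this equals $|E_\gamma|-|B_\gamma|-|W_\gamma|=|E_\gamma|-|\gamma|$, which is the number of independent loops minus the number of connected components, as required.

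The main work is to pass from this sum over \emph{labelled} configurations (black vertices labelled by the $N$th power, slots ordered within each $S^+$ factor, blocks unordered by Fa\`a di Bruno) to a sum over isomorphism classes weighted by $1/|sym(\gamma)|$ and the combinatorial factor of \eqref{eq_qcontraction_def}. This is an orbit--stabiliser count in the spirit of \cite{operad}: I would show that the number of labelled configurations yielding a fixed class $\gamma$, weighted by the $1/N!$ from the exponential, satisfies
\[
\#\{\text{configs}\cong\gamma\}\cdot\frac{1}{N!}=\frac{1}{|sym(\gamma)|}\prod_{v\in B_\gamma}\frac{m_v!}{(1!)^{k^v_1}(2!)^{k^v_2}\cdots(m_v!)^{k^v_{m_v}}}.
\]
Here the $m_v!$ is the number of orderings of the slots at $v$, while each denominator factor $(i!)^{k^v_i}$ removes the orderings of the $i$ parallel edges joining $v$ to a common white neighbour, since permuting parallel edges produces the same partition and hence the same configuration; what survives is exactly the automorphism factor $1/|sym(\gamma)|$. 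This multi-edge bookkeeping, absent in the tree case (where all $k^v_i$ vanish for $i\ge 2$ and the factor collapses to $m_v!$), is the one genuinely new feature of the quantum calculus and the step I expect to be the main obstacle; I would pin down the conventions by checking the identity on the smallest loop diagrams, namely a single double edge and a four-cycle.

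Finally I would treat the boundary terms. The $N=0$ summand is $\expih{G}$ and supplies the leading $1$ (the empty graph). No isolated white vertex can occur, because every white vertex arises from a nonempty block and so has positive degree, which is why the single-vertex graph $\circ$ does not appear in the expansion and must be deleted from $[\overline{\Gamma}^\circ]$ by hand to match the indexing set; likewise the constant and linear parts of $S$ have been removed into $\expih{S^0}$ and into the evaluation at $y=\varphi$, so every black vertex has degree at least two, which is precisely the condition that the graphs have white leaves only. Collecting these contributions yields \eqref{eq_qgen_eq_initial}.
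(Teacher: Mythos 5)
Your proposal is correct and follows essentially the same route as the paper: starting from Lemma \ref{lemma_qgen_eq_exp}, expanding $\expih{S^+(\frac{\hbar}{i}\dely{})}$ over homogeneous pieces, applying the multivariate Fa\`a di Bruno formula so that blocks of the slot partition become white vertices, and then performing the same labelled-to-isomorphism-class count that produces the factor $1/|sym(\gamma)|$ together with the multinomial coefficient of \eqref{eq_qcontraction_def}, with the identical bookkeeping of powers of $\hbar/i$. Your explicit orbit--stabiliser identity is just a sharper phrasing of the counting argument the paper gives verbally, so there is nothing substantively different to flag.
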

\begin{proof}
    We start with equation \eqref{eq_qgen_eq_exp}, noticing that the $\expih{S^0}$ term is unchanged in \eqref{eq_qgen_eq_initial}. First we expand the $\expih{S^+(\frac{\hbar}{i}\dely{})}$ factor to get
    \begin{align}\label{eq_exp_of_S_calc}
        \expih{S^+(\frac{\hbar}{i}\dely{})} & = 1 + \sum_{n = 1}^\infty \frac{1}{n!} \left(\frac{i}{\hbar}\right)^{n} \left( S^+(\frac{\hbar}{i}\dely{}) \right)^n \nonumber\\ 
        & = 1 + \sum_{n = 1}^\infty \frac{1}{n!} \left(\frac{i}{\hbar}\right)^{n} \sum_{(m_1, \dots, m_n) \in \mathbb{Z}_{\ge 2}^n } \left(\frac{\hbar}{i}\right)^{m_1} S^{a_{1,1} \dots a_{1,m_1}} \dely{a_{1,1}} \dots \dely{a_{1,m_1}} \dots \left(\frac{\hbar}{i}\right)^{m_n} S^{a_{n,1} \dots a_{n,m_n}} \dely{a_{n,1}} \dots \dely{a_{n,m_n}} ,
    \end{align}
    where $\mathbb{Z}_{\ge 2}^n$ is just length $n$ ordered lists of integers at least 2. 
    
    At this point it is helpful to visualise bipartite graphs in the following way. Think of all the black vertices as being in a line on the left and the white vertices being in a line on the right. Bipartiteness ensures, and can be defined by, the property that all edges go only from one side to the other, as in the diagram below.
    
    \[\begin{tikzpicture}[x=2cm, y=1cm]
	\vertex[fill] (a) at (0,0) {};
	\vertex[fill] (b) at (0,1) {};
	\vertex[fill] (c) at (0,2) {};
	\vertex (d) at (1,-0.5) {};
        \vertex (e) at (1,0.5) {};
        \vertex (f) at (1,1.5) {};
        \vertex (g) at (1,2.5) {};
	\path
		(a) edge[bend right = 40] (d)
            (a) edge[bend left = 40] (d)
            (a) edge (d)
		(b) edge (g)
		(c) edge (e)
            (b) edge (f)
            (a) edge (f)
            (c) edge (f)
	;
    \end{tikzpicture}\]
    The terms in our expansion of $\expih{S^+(\frac{\hbar}{i}\dely{})}$ can be thought of as all the possible configurations of black vertices with at least two edges coming from them, but where we have not yet decided where to attach these edges. To work this out we need apply the derivatives to $\expih{G(y)}$. 

    Let's calculate a general term of the form
    \begin{equation}\label{eq_calc_genterm}
         \left(\frac{\hbar}{i}\right)^{m_1 + \dots + m_n} S^{a_{1,1} \dots a_{1,m_1}} \dots S^{a_{n,1} \dots a_{n,m_n}} \left( \dely{a_{1,1}} \dots \dely{a_{1,m_1}} \right) \dots \left( \dely{a_{n,1}} \dots \dely{a_{n,m_n}} \right) \expih{G(y)} .
    \end{equation}
    In our graphical perspective this is a line of $n$ black vertices, where the $k$\textsuperscript{th} vertex has $m_k$ unconnected edges coming from it, and this line of black vertices acts using its edges as derivatives on $\expih{G(y)}$. To calculate the multiple derivatives we use the combinatorial and multivariate version of Faà di Bruno's formula as presented by Michael Hardy in \cite{Hardy_combinatorics}. Let $A$ be the set $\{ a_{1,1}, \dots, a_{1,m_1}, \dots, a_{n,1}, \dots, a_{n,m_n} \}$, and $P$ be the set of partitions of $A$. For any partition $\pi \in P$, and any block $B \in \pi$, we write $\dely{(B)} = \prod_{a \in B} \dely{a}$. We then have that
    \begin{equation}\label{eq_fdbcalc}
        \left( \dely{a_{1,1}} \dots \dely{a_{1,m_1}} \right) \dots \left( \dely{a_{n,1}} \dots \dely{a_{n,m_n}} \right) \expih{G(y)} = \sum_{\pi \in P} \left(  \prod_{B \in \pi} \frac{i}{\hbar} \dely{(B)} G(y) \right) \expih{G(y)} .
    \end{equation}
    Going back to the graphical viewpoint, as $A$ is the set of edges coming from our line of $n$ black vertices, our result for the calculation of the general term \eqref{eq_calc_genterm} is a sum over all partitions of the set of (currently unconnected) edges. Moreover, each summand is exactly the result of connecting the edges in each block of the partition to a common white vertex, that is the associated derivatives acting on a common factor of $G$. In this way we associate to each term an isomorphism class of bipartite graphs by the Feynman rules. Now we see that our term \eqref{eq_calc_genterm} becomes a sum over all possible configurations of white vertices attached to the line of black vertices with edges defined by the form of the term, all multiplied by a factor of $\expih{G(y)}$. We can write this as a sum over bipartite graphs with a fixed configuration of black vertices. Then, looking back at our expansion of $\expih{S^+(\frac{\hbar}{i}\dely{})}$, which became one plus a sum over all possible configurations of black vertices with at least two edges, we see that the bracketed term in equation \eqref{eq_qgen_eq_exp} is one plus a double sum over all configurations of black and white vertices, that is exactly a sum over all bipartite graphs with white leaves only but not including the graphs with unconnected white vertices, all multiplied by a factor of $\expih{G(y)}$.
    
    At this point we see that there are a number of non-equal terms in our expansion that are given by equal isomorphism classes of graphs. We now count them. Firstly, recalling that white vertices correspond to blocks of partitions from Faà di Bruno's formula, there are a number of non-equal partitions that give rise to the same way of connecting the edges to white vertices and thus the same bipartite graph. The product of the coefficients in equation \eqref{eq_Feynman_rules_extension} counts the total number of ways of permuting the edges for each black vertex, while discounting the permutations that just swap edges already contained in the same block of a partition (that is connected to the same white vertex). Additionally, there are non-equal configurations of black vertices that can result in isomorphic graphs, the total possibilities being the factorial of the number of black vertices, which cancels with the factor of $1/n!$ in \eqref{eq_exp_of_S_calc}. Finally, note that we have overcounted by the number of ways of rearranging black and white vertices that result in equal terms in our expansion, which directly corresponds to the number of symmetries of the resulting bipartite graph. This explains why we divide by $|sym(\gamma)|$.
    
    All together, taking $y^i = \varphi^i$ we see that we have proven the expansion over bipartite graphs in \eqref{eq_qgen_eq_initial} up to some factors of $\hbar/i$. The power of $E_\gamma$ comes from the sum $m_1 + \dots + m_n$ in \eqref{eq_exp_of_S_calc} being the total number of edges in the resulting graph, i.e. $m_1 + \dots + m_n = E_\gamma$. The power of $-|\gamma|$ comes from the power of $-|\gamma|_\bullet$ from \eqref{eq_exp_of_S_calc} (remembering that $n$ is the number of black vertices of the resulting graph), and the factors of $i/\hbar$ associated to each derivative of $G$ in \eqref{eq_fdbcalc}, each of which corresponds to a white vertex. The result then follows. 
\end{proof} 

We would now like to rearrange the the middle factor in \eqref{eq_qgen_eq_initial} into an exponential, thereby obtaining an expansion over graphs for $I(S,G)$. We will find that the factors of $\hbar/i$ in this expansion correspond to the number of "loops" in the associated graph. So we make the following definition. 
\begin{definition}
    For a bipartite graph $\gamma$, we define a number $b_\gamma := |E_\gamma| - |\gamma| + 1$, which counts the number of loops in $\gamma$.
\end{definition}

We are now ready to state and prove the final expansion.

\begin{proposition}\label{prop_qgen_eq_graph}
    The integral I(S,G) has the following expansion over bipartite graphs.  
    \begin{equation}\label{eq_qgen_eq_graph}
        I(S,G) = \exp \left( \frac{i}{\hbar} \sum_{\gamma \in [\Gamma^\circ]} \frac{1}{|sym(\gamma)|}  \left(\frac{\hbar}{i}\right)^{b_\gamma}  \contraction{S}{G}{\gamma} \right)
    \end{equation}
\end{proposition}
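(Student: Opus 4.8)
The plan is to pass from the sum over all bipartite graphs with white leaves (including disconnected ones) in equation \eqref{eq_qgen_eq_initial} to an exponential of the sum over only \emph{connected} such graphs, which is the content of \eqref{eq_qgen_eq_graph}. This is the standard ``linked cluster'' or ``exponentiation of connected diagrams'' phenomenon from quantum field theory, adapted to our combinatorial setting. First I would observe that the middle factor in \eqref{eq_qgen_eq_initial}, namely
\begin{equation*}
    1 + \sum_{\gamma \in [\overline{\Gamma}^\circ] - \{\circ\}} \frac{1}{|sym(\gamma)|} \left(\frac{\hbar}{i}\right)^{|E_\gamma| - |\gamma|} \contraction{S}{G}{\gamma},
\end{equation*}
is a sum over isomorphism classes of \emph{all} (possibly disconnected) bipartite graphs with white leaves only, since the empty graph corresponds to the leading $1$. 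Any such $\gamma$ decomposes uniquely into connected components $\gamma = \gamma_1 \sqcup \dots \sqcup \gamma_k$, each lying in $[\Gamma^\circ]$, and both the assigned term $\contraction{S}{G}{\gamma}$ (a product of contractions over components) and the exponent $|E_\gamma| - |\gamma|$ (additive over components, after accounting for the shift) factor multiplicatively.

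The key combinatorial step is the behaviour of the symmetry factor under disjoint union. If $\gamma$ has connected components consisting of $n_j$ copies of each isomorphism class $\delta_j \in [\Gamma^\circ]$, then the automorphism group of $\gamma$ is the product of the wreath-type symmetries, giving $|sym(\gamma)| = \prod_j n_j! \, |sym(\delta_j)|^{n_j}$. I would also track the $\hbar/i$ powers: for a connected component $\delta$ one has $|E_\delta| - |\delta| = b_\delta - 1$, so writing $\left(\tfrac{\hbar}{i}\right)^{b_\delta - 1} = \tfrac{i}{\hbar}\left(\tfrac{\hbar}{i}\right)^{b_\delta}$ pulls out exactly one factor of $\frac{i}{\hbar}$ per connected component, which is what furnishes the overall $\frac{i}{\hbar}$ inside the exponential in \eqref{eq_qgen_eq_graph}. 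Putting these together, the sum over all graphs becomes
\begin{equation*}
    \sum_{\{n_j\}} \prod_j \frac{1}{n_j!} \left( \frac{i}{\hbar} \frac{1}{|sym(\delta_j)|} \left(\tfrac{\hbar}{i}\right)^{b_{\delta_j}} \contraction{S}{G}{\delta_j} \right)^{n_j},
\end{equation*}
where the sum ranges over all finitely-supported assignments of multiplicities $n_j \ge 0$ to the classes $\delta_j \in [\Gamma^\circ]$. This is precisely the multinomial expansion of $\exp$ of the sum over $[\Gamma^\circ]$, yielding \eqref{eq_qgen_eq_graph} once the $\expih{S^0}\cdot\expih{G(\varphi)}$ prefactors from \eqref{eq_qgen_eq_initial} are absorbed as the connected single-vertex contributions.

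The main obstacle I anticipate is verifying the factorization of the symmetry factor $|sym(\gamma)| = \prod_j n_j! \, |sym(\delta_j)|^{n_j}$ carefully, since an automorphism of a disconnected graph may both permute isomorphic components among themselves (the $n_j!$ factors) and act by an automorphism within each component (the $|sym(\delta_j)|$ factors); one must confirm no cross terms arise and that the white-leaf condition is preserved componentwise. A secondary point to handle cleanly is the treatment of the leading $1$ and the $\expih{S^0}$, $\expih{G(\varphi)}$ prefactors: I would account for $S^0$ and the overall factor of $G(\varphi)$ by identifying the single-vertex graphs $\bullet$ and $\circ$ (or rather their role as the $b_\delta = 0$, lowest-order connected pieces) so that exponentiating the connected sum reproduces these prefactors, and confirm that the exclusion of the isolated white vertex $\{\circ\}$ in \eqref{eq_qgen_eq_initial} is consistent with the white-leaf-only constraint $[\Gamma^\circ]$ in the final formula.
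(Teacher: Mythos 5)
Your proposal is correct and follows essentially the same route as the paper: the paper likewise proves the linked-cluster exponentiation by matching the expansion of $\exp$ of the connected-graph sum against the disconnected-graph sum of Lemma \ref{lemma_qgen_eq_initial}, using the same symmetry-factor factorization $|sym(\gamma_1 \sqcup \dots \sqcup \gamma_n)| = (n_1! n_2! \dots)|sym(\gamma_1)| \dots |sym(\gamma_n)|$, the same bookkeeping of one factor of $i/\hbar$ per connected component via $|E_\delta|-|\delta| = b_\delta - 1$, and the same absorption of $\expih{S^0}$ and $\expih{G(\varphi)}$ as the single-vertex graph contributions. The only difference is the direction in which the identity is read (you decompose and resum; the paper expands the exponential and matches), which is not a substantive difference.
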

\begin{proof}
    The main idea of this proof is a standard trick used in quantum field theory. First we calculate 
    \begin{gather*}
        \exp \left( \sum_{\gamma \in [\Gamma^\circ]^+ - \{\circ\} } \frac{1}{|sym(\gamma)|}  \left(\frac{\hbar}{i}\right)^{|E_\gamma| - |\gamma|}  \contraction{S}{G}{\gamma} \right) \\ = 
        1 + \sum_{n = 1}^\infty \frac{1}{n!} \left[ \left( \sum_{\gamma_1 \in [\Gamma^\circ]^+ - \{\circ\} } \frac{1}{|sym(\gamma_1)|}  \left(\frac{\hbar}{i}\right)^{|E_{\gamma_1}| - |\gamma_1|}  \contraction{S}{G}{\gamma_1} \right) \dots \left( \sum_{\gamma_n \in [\Gamma^\circ]^+ - \{\circ\} } \frac{1}{|sym(\gamma_n)|}  \left(\frac{\hbar}{i}\right)^{|E_{\gamma_n}| - |\gamma_n|}  \contraction{S}{G}{\gamma_n} \right) \right] \\ 
        = 1 + \sum_{n = 1}^\infty \sum_{\gamma_1 \in [\Gamma^\circ]^+ - \{\circ\} } \dots \sum_{\gamma_n \in [\Gamma^\circ]^+ - \{\circ\} } \left[\vphantom{\frac{\frac{f}{g}a}{b}} \frac{1}{n! |sym(\gamma_1)| \dots |sym(\gamma_n)|} \left( \frac{\hbar}{i} \right)^{|E_{\gamma_1}| - |\gamma_1| + \dots + |E_{\gamma_n}| - |\gamma_n|} \times \right. \\ \left. \times ~ \contraction{S}{G}{\gamma_1} \dots \contraction{S}{G}{\gamma_n} \vphantom{\frac{\frac{f}{g}a}{b}}\right] \\
        = 1 + \sum_{\gamma \in [\overline{\Gamma}^\circ]^+ - \{\circ\} } \frac{1}{|sym(\gamma)|}   \left(\frac{\hbar}{i}\right)^{|E_\gamma| - |\gamma|}  \contraction{S}{G}{\gamma} , 
    \end{gather*}
    where $\gamma = \gamma_1 \sqcup \dots \sqcup \gamma_n$ is the disjoint union of the graphs $\gamma_1, \dots, \gamma_n$, and we have used the fact that $|sym(\gamma_1 \sqcup \dots \sqcup \gamma_n)| = (n_1! n_2! \dots)|sym(\gamma_1)| \dots |sym(\gamma_n)| $, where the $n_i$ are the numbers of equal isomorphism classes of graphs in $\gamma_1, \dots, \gamma_n$, and the number of ways of forming $\gamma$ as a disjoint union is given by $n!/((n_1! n_2! \dots))$. Notice that this is the same as the middle factor in \eqref{eq_qgen_eq_initial}, so we get
    \begin{align*}
         I & = \expih{S^0} \exp \left( \sum_{\gamma \in [\Gamma^{\circ}]^{+} - \{\circ\} } \frac{1}{|sym(\gamma)|}  \left(\frac{\hbar}{i}\right)^{|E_\gamma| - |\gamma|}  \contraction{S}{G}{\gamma} \right) \expih{G(\varphi)} \\
         & = \exp \left( \frac{i}{\hbar} \left( S^0 + G(\varphi) + \sum_{\gamma \in [\Gamma^{\circ}]^{+} - \{\circ\} } \frac{1}{|sym(\gamma)|}  \left(\frac{\hbar}{i}\right)^{|E_\gamma| - |\gamma| + 1}  \contraction{S}{G}{\gamma} \right) \right).
    \end{align*}
    From this we get the desired result since $S^0$ is exactly the term associated to the graph with single black vertex and $G(\varphi)$ is the term associated to the single white vertex graph.
\end{proof}

\begin{remark}\normalfont
    In our use cases, the factors of the form $S^{a_1 \dots a_n}$ and $G$ are all themselves formal power series in $\hbar$. If we want a full expansion of $I(S,G)$ as a power series in $\hbar$ we can simply sum over all the coefficients in these factors, keeping track of powers of $\hbar$. Graphically this amounts to adding a new weighting to both the black and white vertices in the graphs.  
\end{remark}

We will now apply Proposition \ref{prop_qgen_eq_graph} to pullbacks, compositions, and transformations. The process in each case is essentially the same as for the classical graphical calculi. In particular, for each case the quantum Feynman rules are the same as those for the classical case, with the extension to graphs with loops given by equation \eqref{eq_Feynman_rules_extension}.

\subsection{Graphical Calculus for Pullbacks by Quantum Thick Morphisms}\label{sec_qpullback_graph}

Consider the pullback of an oscillatory wave function of the form $\expih{g(y)} \in OC^\infty_\hbar(M_2)$ by $\Phi : M_1 \text{\ding{225}} M_2$, as defined by equation \eqref{eq_qpullback}. To apply Proposition \ref{prop_general_eq_graph} to pullbacks we need only replace $G$ with $g$ and reintroduce the dependence of the coefficients of $S$ on $x$ and $\hbar$, which makes no difference to the formulas. We now extend the formula in equation \eqref{eq_pullback_contraction_def} for the terms in the pullback to graphs with loops. 
\begin{definition}
    Given a generating function $S_\hbar(x,q) = S^0_\hbar(x) + \varphi_\hbar^i(x)q_i + S_\hbar^{ij}(x)q_iq_j + \dots$, an oscillatory wave function of the form $\expih{g(y)} \in OC^\infty_\hbar(M_2)$, and a bipartite graph $\gamma = (B_\gamma, W_\gamma, E_\gamma)$, we define
    \begin{equation}\label{eq_qpullback_contraction_def}
        \contraction{S_\hbar}{g}{\gamma}(x) := \left( \prod_{v \in B_\gamma}  \frac{m_v!}{(1!)^{k^v_1} (2!)^{k^v_2} \dots (m_v!)^{k^v_{m_v}}} S_\hbar^{e^v_1, \dots, e^v_{m_v}}(x) \right) \left( \prod_{w \in W_t} \dely{e^w_1} \dots \dely{e^w_{m_w}} g(\varphi_\hbar(x)) \right),
    \end{equation}
    where for each $v \in V_\gamma$, we denote the elements of $E_v$, the set of edges connected to $v$, by $e^v_1, \dots, e^v_{m_v}$, and the labels $e^v_k$ are used as dummy indices to be summed over by the Einstein summation convention. Again we let $k^v_i$ be the number of white vertices with which the vertex $v$ shares $i$ edges.  
\end{definition}
Applying Proposition \ref{prop_qgen_eq_graph}, we then get the expansions over bipartite graphs
\begin{align*}
    (\hat{\Phi}^* \expih{g})(x) & = \exp \left( \frac{i}{\hbar} \sum_{\gamma \in [\Gamma^\circ]} \frac{1}{|sym(\gamma)|}  \left(\frac{\hbar}{i}\right)^{b_\gamma}  \contraction{S_\hbar}{g}{\gamma}(x) \right) \\
    & =  \sum_{\gamma \in [\overline{\Gamma}^\circ]} \frac{1}{|sym(\gamma)|}  \left(\frac{\hbar}{i}\right)^{b_\gamma - C_\gamma }  \contraction{S_\hbar}{g}{\gamma}(x) , 
\end{align*}
where we define $C_\gamma$ to be the number of connected components of $\gamma$.

What about pullbacks of general oscillatory wave functions of the form $w(y) = a_\hbar(y)\expih{g(y)}$? Theorem 11 of \cite{Voronov_microformal_homotopyalg} gives the exponential formula for the pullback
\begin{equation}\label{eq_qpullback_exp}
        (\hat{\Phi}^* w)(x) = \expih{S^0_{\hbar}(x)} \left( \expih{S^+_{\hbar}(x,\frac{\hbar}{i}\dely{}) } a_\hbar(y)\expih{g(y)} \right)_{\Big\arrowvert{y^i = \varphi^i_{\hbar}(x)}} ,
\end{equation}
which is the correct analogue of equation \eqref{eq_qgen_eq_exp}. Then using the Leibniz rule we see that all terms resulting from equation \eqref{eq_qpullback_exp} will contain a single factor of $a_\hbar(y)$ acted on by some amount of $y$-derivatives contracted with coefficients of $S^+_\hbar(x,q)$, and the terms coming from derivatives of $\expih{g(y)}$ will be unchanged. So we can represent the $a_\hbar(y)$ terms graphically by a single "special" white vertex which is allowed to have no edges attached to it, representing terms where no derivatives act upon $a_\hbar(y)$.

\subsection{Graphical Calculus for Compositions of Quantum Thick Morphisms}\label{sec_qcomp_graph}

Consider the composition of quantum thick morphisms $\hat{\Phi}_H = \hat{\Phi}_G \circ \hat{\Phi}_F$, with generating function $H_{\hbar}(x,r)$ defined by equation \eqref{eq_qthick_comp}. To apply Proposition \ref{prop_qgen_eq_graph} we perform the same process as in section \ref{sec_comp_graph}, including the use of white-weighted bipartite graphs. Indeed expanding the formal power series $G_\hbar(y,r)$ in $r$, again corresponds to also summing over all possible weightings of the white-vertices, and we get factors of $r$ which contract with the appropriate part of $G$ using the same multi-index $I_\gamma$ from definition \ref{def_multi_index_I}. We now extend the formula in equation \eqref{eq_comp_contraction_def} for the terms in the composition to graphs with loops. 
\begin{definition}
    Given generating functions $F_\hbar(x, q) = F_\hbar^0(x) + \varphi_\hbar^i(x)q_i + F_\hbar^{ij}(x)q_iq_j + \dots$ and
    $G_\hbar(y,r) = G_\hbar^0(y) + \gamma_\hbar^i(y)r_i + G_\hbar^{ij}(y)r_ir_j + \dots$, and a white weighted bipartite graph $\gamma = (B_\gamma, W_\gamma, E_\gamma, L_\gamma)$, we define
    \begin{equation}\label{eq_qcomp_contraction_def}
        \contraction{F_\hbar}{G_\hbar}{\gamma}^{I_\gamma}(x) := \left( \prod_{v \in B_\gamma} \frac{m_v!}{(1!)^{k^v_1} (2!)^{k^v_2} \dots (m_v!)^{k^v_{m_v}}} F_\hbar^{e^v_1, \dots, e^v_{m_v}(x)} \right) \left( \prod_{w \in W_\gamma} \dely{e^w_1} \dots \dely{e^w_{m_w}} G_\hbar^{a_{w, 1} \dots a_{w, \ell_w}}(\varphi_\hbar(x)) \right),
    \end{equation}
    where for each $v \in V_\gamma$, we denote the elements of $E_v$, the set of edges connected to $v$, by $e^v_1, \dots, e^v_{m_v}$, and the labels $e^v_k$ are used as dummy indices to be summed over by the Einstein summation convention. Again we let $k^v_i$ be the number of white vertices with which the vertex $v$ shares $i$ edges.  
\end{definition}

By Proposition \ref{prop_qgen_eq_graph} we get the following expansion for the exponential of the generating function.
\begin{equation*}
    \expih{H_{\hbar}(x,r)} = \exp \left( \frac{i}{\hbar} \sum_{\gamma \in [\Gamma^\circ]_{\infty/2}} \frac{r_{I_\gamma}}{|sym(\gamma)|}  \left(\frac{\hbar}{i}\right)^{b_\gamma}  \contraction{F_\hbar}{G_\hbar}{\gamma}^{I_\gamma}(x) \right)
\end{equation*}
Now taking logarithms and multiplying both sides by $\hbar/i$ we get that the generating function itself has the following expansion as a formal power series in both $r$ and $\hbar$. 
\begin{equation}\label{eq_qcomp_graph}
    H_{\hbar}(x,r) =  \sum_{\gamma \in [\Gamma^\circ]_{\infty/2}} \frac{r_{I_\gamma}}{|sym(\gamma)|}  \left(\frac{\hbar}{i}\right)^{b_\gamma}  \contraction{F_\hbar}{G_\hbar}{\gamma}^{I_\gamma}(x)
\end{equation}

\subsection{Graphical Calculus for Coordinate Transformations of Quantum Thick Morphisms}\label{sec_qtransform_graph}

Consider an invertible change of local coordinates on $M_1\times M_2$ given by $x = x(x')$, $y = y(y')$, with $p', q'$ the corresponding conjugate momenta, under which the generating function for a thick morphism $S(x,q)$ in the "old" coordinates transforms by equation \eqref{eq_qthick_transformation}. To apply Proposition \ref{prop_qgen_eq_graph} we perform the same process as in section \ref{sec_transformation_graph}, including using the same multi-index $\alpha_\gamma$ from definition \ref{def_multi_index_alpha} to write the expansion as a formal power series in $q'$. We now extend the formula in equation \eqref{eq_transformation_contraction_def} for the terms in the coordinate transformation to graphs with loops. 
\begin{definition}
    Given a generating function $S_\hbar(x,q) = S_\hbar^0(x) + \varphi_\hbar^i(x)q_i + S_\hbar^{ij}(x)q_iq_j + \dots$, a change of coordinates $x = x(x')$, $y = y(y')$, and a bipartite graph $\gamma = (B_\gamma, W_\gamma, E_\gamma)$, we define
    \begin{equation}\label{eq_qtransformation_contraction_def}
        \contraction{S_\hbar}{y'}{\gamma}^{\alpha_\gamma}(x') := \left( \prod_{v \in B_\gamma} \frac{m_v!}{(1!)^{k^v_1} (2!)^{k^v_2} \dots (m_v!)^{k^v_{m_v}}} S_\hbar^{e^v_1, \dots, e^v_{m_v}}(x(x')) \right) \left( \prod_{w \in W_\gamma} \dely{e^w_1} \dots \dely{e^w_{m_w}} y^{a_w'}(\varphi_\hbar(x(x'))) \right),
    \end{equation}
    where for each $v \in V_\gamma$, we denote the elements of $E_v$, the set of edges connected to $v$, by $e^v_1, \dots, e^v_{m_v}$, and the labels $e^v_k$ are used as dummy indices to be summed over by the Einstein summation convention. Again we let $k^v_i$ be the number of white vertices with which the vertex $v$ shares $i$ edges.  
\end{definition}

By Proposition \ref{prop_qgen_eq_graph} we get an expansion for the exponential of the generating function $S_\hbar'(x',q')$ in the "new" coordintates. Then by taking logarithms we get that $S_\hbar'(x',q')$ itself has the following expansion as a formal power series in $q'$
\begin{equation}
    S_\hbar'(x',q') = \sum_{\gamma \in [\Gamma^\circ]} \frac{q'_{\alpha_\gamma}}{|sym(\gamma)|} \contraction{S_\hbar}{y'}{\gamma}^{\alpha_\gamma}(x') .
\end{equation}

\addcontentsline{toc}{section}{Discussion and Example Calculation}
\section*{Discussion and Example Calculation}\label{sec_discussion}

\addcontentsline{toc}{subsection}{The Classical Limit}
\subsection*{The Classical Limit}
As expected, the graphical calculus for classical thick morphisms is the $\hbar \rightarrow 0$ limit of the graphical calculus for quantum thick morphisms. In \cite{Voronov_microformal_homotopyalg}, Voronov shows that classical pullbacks, compositions, and coordinate transformations are all the $\hbar \rightarrow 0$ limits of their quantum counterparts. Graphically, taking the $\hbar \rightarrow 0$ limit amounts to ignoring graphs $\gamma$ with $b_\gamma > 0$, i.e. restricting to bipartite trees. So we recover the classical graphical calculus in this way.

\addcontentsline{toc}{subsection}{Removing the Assumption of Symmetry for the Generating Function}
\subsection*{Removing the Assumption of Symmetry for the Generating Function}

Next we will discuss what happens to the graphical calculus when the coefficients $S^{a_1 \dots a_m}(x)$ are no longer assumed to be symmetric in their indices. It is sufficient to look at the quantum case and then take the classical limit to obtain the classical case. The complication is that Feynman rules must now specify to which index in $S^{a_1 \dots a_m}(x)$ an edge in a bipartite graph corresponds to a contraction of. So we must upgrade our class of bipartite graphs so that the set of edges from each black vertex $v$ is equipped with a total order or equivalently a labeling by $1, \dots, m_v$, where $m_v$ is the degree of $v$. The correct notion of isomorphism for such graphs is given by isomorphisms for bipartite graphs where the bijection on edges respects the edge ordering for each black vertex. We then upgrade the Feynman rules so that the $k$th edge from some black vertex to a white vertex corresponds to contraction in the $k$th index of the associated coefficient of $S$. Also, for any black vertex $v$, we change the factor given in \eqref{eq_Feynman_rules_extension} by simply removing the combinatorial factor in front of $S^{* \dots *}$, since this was included to account for different orderings of contractions with $S^{* \dots *}$ that result in the same graph. But, since the edges of our graphs are now labeled with this ordering, these different orderings will produce different graphs in the resulting expansion. With these modifications all the expansions given for the quantum case (and by taking $\hbar \rightarrow 0$ the classical case) take the same form but with sums over all graphs with edge sets from each black vertex equipped with total orders, denoting this set by $\Gamma^\circ_{ord}$ and the corresponding set of trees by $T^\circ_{ord}$.

\addcontentsline{toc}{subsection}{The Super Case}
\subsection*{The Super Case}

We now allow the source and target of our thick morphisms to be supermanifolds. The existence of odd (anti-commuting) variables introduces a number of complications to do with signs and ordering which we will deal with in turn. Again we will first look at the quantum case and derive the classical case by taking $\hbar \rightarrow 0$. 

Firstly, while the indices on coefficients $S^{a_1 \dots a_m}(x)$ can be assumed to be symmetric in a super sense (swapping adjacent indices gives the usual sign dependent on the parities of these indices), this does not aid the calculation in the quantum case. So, as in the previous section, we do not assume symmetry and use graphs with ordered sets of edges coming from each black vertex.

Next, notice that the coefficients $S^{a_1 \dots a_m}(x)$ can themselves have odd parity, since the generating function must be even. Explicitly, $S^{a_1 \dots a_m}(x)$ must be odd if and only if the total parity of $q_{a_1} \dots q_{a_m}$ is odd, i.e. $a_1 + \dots + a_m = 1 \mod 2$. So the coefficients $S^{a_1 \dots a_m}(x)$ do not necessarily commute with each other and their order must be specified by the Feynman rules. This means that the black vertices of our graphs must have a fixed order, and we cannot use topological bipartite graphs.

Now consider the proof of lemma \ref{lemma_qgen_eq_initial}. The first problem arises at the calculation of the general term \eqref{eq_calc_genterm}. Since the general term now involves derivatives by odd variables we need a super version of the Fa\`{a} di Bruno formula. We will show a general version of such a formula in an upcoming paper. In this case, since $\frac{i}{\hbar}G(y)$ is even, the Fa\`{a} di Bruno formula is the same as in the even case, except with some additional signs and a specific ordering of the factors and derivatives, which are taken care of by the following definitions.

\begin{definition}\label{def_partition_ordering}
    Let $\pi$ be a partition of the ordered set of indices $\{ a_1, \dots, a_m \}$. Then each block inherits the ordering of the whole set, and we order the blocks in $\pi$ by their last element, denoting them in order by $B^1_\pi, \dots, B^{|\pi|}_\pi$. The sum of the ordering on blocks of $\pi$ with the ordering within blocks also defines a new ordering on the whole set of indices, and we denote this ranking by $\pi(a_i)$ for any $i = 1, \dots, m$. For example, let $m=5$ and take the partition $ \pi = \{ \{a_1,a_4\},\{a_2,a_5\},\{a_3\}\}$. Then we have $B^1_\pi = \{a_3\}$, $B^2_\pi = \{a_1, a_4\}$, and $B^3_\pi = \{a_2, a_5\}$, and the ranking on indices is given by $\pi(a_3) < \pi(a_1) < \pi(a_4) < \pi(a_2) < \pi(a_5)$. 
\end{definition}
\begin{definition}\label{def_partition_parity}
    We define the parity $\tilde{\pi}$ of any partition $\pi$ of $\{ a_1, \dots, a_m \}$ by
    \begin{equation}
        \tilde{\pi} \coloneqq \sum_{\substack{1 \le i,j \le m \\ i<j \,\land\, \pi(i) > \pi(j)}} \tilde{a_i} \tilde{a_j} \mod 2.
    \end{equation}
    The value of $\tilde{\pi}$ essentially measures the parity of the distance of the ordering of the partition from the original ordering of the set of indices, where only two disordered indices both having odd parity contribute. We can think of this as a special version of the Kendall tau distance between the original ordering and the ordering induced by $\pi$ on the set of indices.
\end{definition}  
The super Fa\`{a} di Bruno formula then is the same as the classical case but the term associated to each partition $\pi$ comes with a sign $(-1)^{\tilde{\pi}}$, and the derivatives over blocks $\dely{(B)}$ are ordered with respect to the ordering on blocks, and the individual partial derivatives in the operator $\dely{(B)}$ are ordered by the ordering within the blocks, as described in definition \ref{def_partition_ordering}. In particular equation \eqref{eq_fdbcalc} becomes 
\begin{equation}\label{eq_fdbcalc_super}
        \left( \dely{a_{1,1}} \dots \dely{a_{1,m_1}} \right) \dots \left( \dely{a_{n,1}} \dots \dely{a_{n,m_n}} \right) \expih{G(y)} = \sum_{\pi \in P} (-1)^{\tilde{\pi}} \left(  \prod_{i = 1}^{|\pi|} \frac{i}{\hbar} \dely{(B^i_\pi)} G(y) \right) \expih{G(y)} .
\end{equation}
Graphically all such terms are again described by an edge labeled bipartite graph as before, and the sign associated to a graph can be pictorially calculated by the following procedure.
\begin{definition}
    To each edge labeled bipartite graph $\gamma$ we associate a value $\tilde{\gamma} \in \mathbb{Z}_2$ by:
    \begin{enumerate}
        \item Draw the sets of black and white vertices in $\gamma$ in order along two parallel lines - note that the order of the white vertices is defined by the order of the highest order edge attached to them
        \item For each edge $e \in E_\gamma$, first draw a "waypoint" $w_e$ along a third parallel line in between the black and white vertices. The waypoints should be ordered by the total order on the entire set of edges.
        \item For each edge $e \in E_\gamma$, draw a line from $e_\bullet$ to the associated waypoint $w_e$ and then from $w_e$ to $e_\circ$.
        \item Edges may cross in the area between the waypoints and the white vertices. For each crossing of two edges $e$ and $e'$, associate a parity given by the product $\tilde{a}_e \tilde{a}_{e'}$ of the parities associated to the indices for each edge.
        \item Finally obtain $\tilde{\gamma}$ as the sum over all of the parities for each crossing of edges.
    \end{enumerate}
    For example take the graph drawn below according to our procedure.
    \[\begin{tikzpicture}[x=0.67cm, y=0.5cm]
    \vertex[fill, label=left: 2] (b2) at (0,0.5) {};
    \vertex[fill, label=left: 1] (b1) at (0,2.5) {};

    \vertex[inner sep = 1pt] (a) at (1,3) {a};
    \vertex[inner sep = 1pt] (b) at (1,2) {b};
    \vertex[inner sep = 1pt] (c) at (1,1) {c};
    \vertex[inner sep = 1pt] (d) at (1,0) {d};

    \vertex (w2)[label=right: 2] at (3,0.5) {};
    \vertex (w1)[label=right: 1] at (3,2.5) {};
    
	\path
		(b1) edge (a)
        (b1) edge (b)
        (b2) edge (c)
        (b2) edge (d)

        (a) edge (w2)
        (b) edge (w1)
        (c) edge (w1)
        (d) edge (w2)
	;
    \end{tikzpicture}\]
    Notice there are two crossings, between the edges labeled by a and b, and between the edges labeled by a and c. So its total associated term (with sign), for the general expansion, is given by
    \begin{equation*}
        (-1)^{\tilde{a}\tilde{b} + \tilde{a}\tilde{c}} \frac{\hbar}{i}S^{ab}S^{cd}\dely{b}\dely{c}G(y)\dely{a}\dely{d}G(y) .
    \end{equation*}
\end{definition}
Now notice that if $\gamma$ is a disconnected graph with no mixing in the ordering of the vertices in each connected component, then the sign $(-1)^{\tilde{\gamma}}$ will be the product of the signs of each connected component. For this reason, the calculations in the proof of Proposition \ref{prop_qgen_eq_graph} all go through, and we obtain a super version of the general expansion. Explicitly, the super version of the integral $I(S,G)$ has the expansion
\begin{equation}\label{eq_qgen_eq_graph_super}
        I(S,G) = \exp \left( \frac{i}{\hbar} \sum_{\gamma \in \Gamma^\circ_{ord}} \frac{(-1)^{\tilde{\gamma}}}{|\gamma|_\bullet! |\gamma|_\circ!}  \left(\frac{\hbar}{i}\right)^{b_\gamma}  \contraction{S}{G}{\gamma} \right) .
\end{equation}
Again, the three cases of pullbacks, composition, and coordinate transformations all follow from this expansion in the same way as before, and the classical calculi follow by taking $\hbar \rightarrow 0$.

\addcontentsline{toc}{subsection}{Example Calculation}
\subsection*{Example Calculation}
To finish we use the graphical calculus for quantum pullbacks to calculate a low order approximation of a general pullback.  

\begin{example}
    Let's calculate the logarithm of the pullback of $\expih{g(y)}$ by $\hat{\Phi}$ up to third order in $g$ and first order in $\hbar$. The graphs involved are all white-leaved bipartite graphs with at most 3 white vertices and at most 1 loop, which we list below.
    \[\begin{tikzpicture}[x=0.67cm, y=0.5cm]
	\vertex (a1) at (0,0) {};
    \vertex[fill] (a2) at (1,0) {};
    \vertex (a3) at (2,0) {};
    \vertex[fill] (a4) at (3,0) {};
    \vertex (a5) at (4,0) {};
    
	\vertex (b1) at (6,0) {};
    \vertex[fill] (b2) at (7,0) {};
    \vertex (b3) at (8,0) {};
    \vertex[fill] (b4) at (9,0) {};
    \vertex (b5) at (10,0) {};
    
	\vertex (c1) at (12,0) {};
    \vertex[fill] (c2) at (13,0) {};
    \vertex (c3) at (14,0) {};
    \vertex[fill] (c4) at (15,0) {};
    \vertex (c5) at (16,0) {};
    
	\vertex (d1) at (18,0) {};
    \vertex[fill] (d2) at (19,0) {};
    \vertex (d3) at (20,0) {};
    \vertex[fill] (d4) at (21,0) {};
    \vertex (d5) at (22,0) {};
    
    \vertex (e1) at (1,3) {};
    \vertex (e2) at (1,5) {};
    \vertex[fill] (e3) at (2,4) {};
    \vertex (e4) at (3,4) {};

    \vertex[fill] (n1) at (5,4) {};
    \vertex (n2) at (6,4) {};
    \vertex[fill] (n3) at (7,4) {};
    \vertex (n4) at (8,4) {};
    
    \vertex (f1) at (10,3) {};
    \vertex (f2) at (10,5) {};
    \vertex[fill] (f3) at (11,4) {};
    \vertex (f4) at (12,4) {};
    
    \vertex (g1) at (14,4) {};
    \vertex[fill] (g2) at (15,4) {};
    \vertex (g3) at (16,3) {};
    \vertex (g4) at (16,5) {};
    \vertex[fill] (g5) at (17,4) {};

    \vertex[fill] (o1) at (19,4) {};
    \vertex (o2) at (20,4) {};
    \vertex[fill] (o3) at (21,4) {};
    \vertex (o4) at (22,3) {};
    \vertex (o5) at (22,5) {};

    \vertex (g1) at (14,4) {};
    \vertex[fill] (g2) at (15,4) {};
    \vertex (g3) at (16,3) {};
    \vertex (g4) at (16,5) {};
    \vertex[fill] (g5) at (17,4) {};
    
    \vertex (h1) at (0,7) {};
    
    \vertex[fill] (i1) at (2,7) {};

    \vertex[fill] (m1) at (4,7) {};
    \vertex (m2) at (5,7) {};
    
    \vertex (j1) at (7,7) {};
    \vertex[fill] (j2) at (8,7) {};
    \vertex (j3) at (9,7) {};
    
    \vertex (k1) at (13,7) {};
    \vertex[fill] (k2) at (14,7) {};
    \vertex (k3) at (15,7) {};
    
    \vertex (l1) at (19,7) {};
    \vertex[fill] (l2) at (20,6) {};
    \vertex[fill] (l3) at (20,8) {};
    \vertex (l4) at (21,7) {};

    \vertex[fill] (q1) at (3,-3) {};
    \vertex (q2) at (4,-3) {};
    \vertex[fill] (q3) at (5,-3) {};
    \vertex (q4) at (6,-3) {};
    \vertex[fill] (q5) at (7,-3) {};
    \vertex (q6) at (8,-3) {};

    \vertex (p1) at (15,-3) {};
    \vertex[fill] (p2) at (16,-3) {};
    \vertex (p3) at (17,-3) {};
    \vertex[fill] (p4) at (17,-2) {};
    \vertex[fill] (p5) at (18,-3) {};
    \vertex (p6) at (19,-3) {};
    
	\path
		(a1) edge (a2)
        (a2) edge (a3)
        (a3) edge (a4)
        (a4) edge (a5)

        (b1) edge[bend left = 30] (b2)
        (b1) edge[bend right = 30] (b2)
        (b2) edge (b3)
        (b3) edge (b4)
        (b4) edge (b5)

        (c1) edge (c2)
        (c2) edge[bend left = 30] (c3)
        (c2) edge[bend right = 30] (c3)
        (c3) edge (c4)
        (c4) edge (c5)

        (d1) edge (d2)
        (d2) edge (d3)
        (d3) edge (d4)
        (d4) edge (d5)
        (d2) edge[bend left = 40] (d5)

        (e1) edge (e3)
        (e2) edge (e3)
        (e3) edge (e4)

        (f1) edge (f3)
        (f2) edge (f3)
        (f3) edge[bend left = 30] (f4)
        (f3) edge[bend right = 30] (f4)

        (g1) edge (g2)
        (g2) edge (g3)
        (g2) edge (g4)
        (g3) edge (g5)
        (g4) edge (g5)

        (m1) edge[bend left = 30] (m2)
        (m1) edge[bend right = 30] (m2)

        (j1) edge (j2)
        (j2) edge (j3)

        (k1) edge[bend left = 30] (k2)
        (k1) edge[bend right = 30] (k2)
        (k2) edge (k3)

        (l1) edge (l2)
        (l1) edge (l3)
        (l2) edge (l4)
        (l3) edge (l4)

        (n1) edge[bend left = 30] (n2)
        (n1) edge[bend right = 30] (n2)
        (n2) edge (n3) 
        (n3) edge (n4)

        (o1) edge[bend left = 30] (o2)
        (o1) edge[bend right = 30] (o2)
        (o2) edge (o3)
        (o3) edge (o4)
        (o3) edge (o5)

        (q1) edge[bend left = 30] (q2)
        (q1) edge[bend right = 30] (q2)
        (q2) edge (q3)
        (q3) edge (q4)
        (q4) edge (q5)
        (q5) edge (q6)

        (p1) edge (p2)
        (p2) edge (p3) 
        (p3) edge[bend left = 30] (p4)
        (p3) edge[bend right = 30] (p4)
        (p3) edge (p5)
        (p5) edge (p6)
	;
    \end{tikzpicture}\]
\end{example}
By summing over these graphs we get the approximation
\begin{gather*}
    (\hat{\Phi}^* \expih{g})(x) \approx \exp \left[ \frac{i}{\hbar}  \left(  g(\varphi_\hbar(x)) + S^0_\hbar(x) + \frac{\hbar}{i}S^{ab}_\hbar(x) \dely{a}\dely{b}g(\varphi_\hbar(x))  + S^{ab}_\hbar(x) \dely{a}g(\varphi_\hbar(x)) \dely{b}g(\varphi_\hbar(x)) \right.\right. \\ 
    + 3 \frac{\hbar}{i}S^{abc}_\hbar(x) \dely{a}\dely{b}g(\varphi_\hbar(x))\dely{c}g(\varphi_\hbar(x)) + \frac{\hbar}{i}S^{ab}_\hbar(x)S^{cd}_\hbar(x)\dely{a}\dely{b}g(\varphi_\hbar(x))\dely{c}\dely{d}g(\varphi_\hbar(x))\\
    + S^{abc}_\hbar(x)\dely{a}g(\varphi_\hbar(x))\dely{b}g(\varphi_\hbar(x))\dely{c}g(\varphi_\hbar(x)) 
    + 2 \frac{\hbar}{i}S^{ab}_\hbar(x)S^{cd}_\hbar(x)\dely{a}\dely{b}\dely{c}g(\varphi_\hbar(x))\dely{d}g(\varphi_\hbar(x)) \\
    + 6\frac{\hbar}{i}S^{abcd}_\hbar(x)\dely{a}g(\varphi_\hbar(x))\dely{b}g(\varphi_\hbar(x))\dely{c}\dely{d}g(\varphi_\hbar(x)) \\ 
    + 6 \frac{\hbar}{i}S^{abc}_\hbar(x)S^{de}_\hbar(x)\dely{a}g(\varphi_\hbar(x))\dely{b}\dely{d}g(\varphi_\hbar(x))\dely{c}\dely{e}g(\varphi_\hbar(x)) \\
    + 3\frac{\hbar}{i}S^{ab}_\hbar(x)S^{cde}_\hbar(x)\dely{a}\dely{b}\dely{c}g(\varphi_\hbar(x))\dely{d}g(\varphi_\hbar(x))\dely{e}g(\varphi_\hbar(x)) \\ 
    + 2S^{ab}_\hbar(x)S^{cd}_\hbar(x)\dely{a}g(\varphi_\hbar(x))\dely{b}\dely{c}g(\varphi_\hbar(x))\dely{d}g(\varphi_\hbar(x)) \\
    + 6\frac{\hbar}{i}S^{abc}_\hbar(x)S^{de}_\hbar(x)\dely{a}\dely{b}g(\varphi_\hbar(x))\dely{c}\dely{d}g(\varphi_\hbar(x))\dely{e}g(\varphi_\hbar(x)) \\ 
    + 6\frac{\hbar}{i}S^{abc}_\hbar(x)S^{de}_\hbar(x)\dely{a}g(\varphi_\hbar(x))\dely{b}\dely{c}\dely{d}g(\varphi_\hbar(x))\dely{e}g(\varphi_\hbar(x)) \\
    + 12\frac{\hbar}{i}S^{abc}_\hbar(x)S^{de}_\hbar(x)\dely{a}g(\varphi_\hbar(x))\dely{b}\dely{d}g(\varphi_\hbar(x))\dely{c}\dely{e}g(\varphi_\hbar(x)) \\ 
    + 4\frac{\hbar}{i}S^{ab}_\hbar(x)S^{cd}_\hbar(x)S^{ef}_\hbar(x)\dely{a}\dely{b}\dely{c}g(\varphi_\hbar(x))\dely{d}\dely{e}g(\varphi_\hbar(x))\dely{f}g(\varphi_\hbar(x)) \\
    + 4\frac{\hbar}{i}S^{ab}_\hbar(x)S^{cd}_\hbar(x)S^{ef}_\hbar(x)\dely{c}g(\varphi_\hbar(x))\dely{a}\dely{b}\dely{d}\dely{e}g(\varphi_\hbar(x))\dely{f}g(\varphi_\hbar(x))
    \left.\left.            \vphantom{\frac{i}{\hbar}}\right)\right] .  
\end{gather*}

\addcontentsline{toc}{section}{Declarations}
\section*{Declarations}
    This work was supported by the Additional Funding Programme for Mathematical Sciences, delivered by EPSRC (EP/V521917/1) and the Heilbronn Institute for Mathematical Research. The author has no competing interests to declare that are relevant to the content of this article. This version of the article has been accepted for publication after peer review, but is not the Version of Record. The version of record is available online at \url{https://doi.org/10.1007/s00220-025-05372-9}.

\addcontentsline{toc}{section}{References}
\printbibliography

\end{document}